\theoremstyle{definition} \newtheorem{definition}{Definition}[section]
\theoremstyle{definition} \newtheorem{theorem}{Theorem}[section]
\theoremstyle{definition} 
\theoremstyle{definition} 
\theoremstyle{definition} \newtheorem{Proposition}{Proposition}[section]
\newtheorem{remark}{Remark}[section]
\begin{document}

\title{Symplectic methods based on Pad$\acute{e}$ approximation for some stochastic Hamiltonian systems}

\author{Liying Sun$^{a}$ and Lijin Wang$^{a}$\noindent {\footnote{Corresponding author, E-mail:ljwang@ucas.ac.cn}}\\
{\small a. School of Mathematical Sciences, University of Chinese Academy of Sciences, Beijing \textup{100049}}\\
}

\maketitle

\begin{abstract}
In this article, we introduce a kind of numerical schemes, based on Pad$\acute{e}$ approximation, for two  stochastic Hamiltonian systems which are treated separately. For the linear stochastic Hamiltonian systems, it is shown that the applied Pad$\acute e$ approximations $P_{(k,k)}$ give numerical solutions that inherit the symplecticity and the proposed numerical schemes based on $P_{(r,s)}$ are of mean-square order $\frac{r+s}{2}$ under appropriate conditions. In case of the special stochastic Hamiltonian systems with additive noises, the numerical method using two kinds of Pad$\acute e$ approximation $P_{(\hat r,\hat s)}$ and $P_{(\check r,\check s)}$ has mean-square order $\check r+\check s+1$ when $\hat r+\hat s=\check r+\check s+2$. Moreover, the numerical solution is symplectic if $\hat r=\hat s$.
 \\
{\it Keywords:} Pad$\acute{e}$ approximation; linear stochastic Hamiltonian systems; symplecticity; mean-square order;
\end{abstract}

\section{Introduction}
\label{1}
It is well known that the flow $\varphi(t)$ of the deterministic Hamiltonian system
\begin{equation}\label{1.1}
\begin{split}
&dp=-\frac{\partial H(p,q)}{\partial q}dt,\quad p(0)=p_0,\\
&dq=\frac{\partial H(p,q)}{\partial p}dt,\quad q(0)=q_0,
\end{split}
\end{equation}
is symplectic, i.e. $$dp(t)\wedge dq(t)=dp_0\wedge dq_0, \forall t\geq 0,$$ which also can be depicted as follows
\begin{equation*}\label{1.1.5}
\left(\frac{\partial \varphi(t)}{\partial y_0}\right)^{\top}J\left(\frac{\partial \varphi(t)}{\partial y_0}\right)=J,\quad y_0=(p_0,q_0)^{\top}.
\end{equation*}
In recent years, many numerical analysts have been devoting to constructing symplectic numerical methods for such systems, that is, numerical approximations $(p_{n},q_{n})$ that preserve the symplecticity of the underlying Hamiltonian systems, characterised by $$dp_{n+1}\wedge dq_{n+1}=dp_{n}\wedge dq_{n},\quad \forall n\in Z, n\geq 0.$$ For example, the numerical methods based on the P$\acute{a}$de approximation are effective symplectic solvers of linear Hamiltonian systems (LHS) \cite{feng}, i.e. the Hamiltonian system (\ref{1.1}) where Hamiltonian function $H(p,q)$ is of the quadratic form
$$H(p,q)=\frac{1}{2}(p^{\top},q^{\top})C
\begin{pmatrix}
p \\ q
\end{pmatrix}, \quad C^{\top}=C.$$ For the LHS,  
symplectic numerical schemes based on the Pad$\acute e$ approximation $P_{(k,k)}(C)$ take the form
\begin{equation}
\label{1.2}
\begin{bmatrix}
p_{n+1} \\ q_{n+1}
\end{bmatrix}
=P_{(k,k)}(C)
\begin{bmatrix}
p_{n} \\  q_{n}
\end{bmatrix}
=\left[D_{(k,k)}(C)\right]^{-1}N_{(k,k)}(C)
\begin{bmatrix}
p_{n} \\ q_{n}
\end{bmatrix},
\end{equation}
with the matrix polynomials
\begin{equation}
\label{1.3}
\begin{split}
&N_{(k,k)}(C)=\sum_{i=0}^{k} \frac{(2k-i)!k!}{(2k)!i!(k-i)!}(hJ^{-1}C)^{i},\\
&D_{(k,k)}(C)=\sum_{i=0}^{k} \frac{(2k-i)!k!}{(2k)!i!(k-i)!}(-hJ^{-1}C)^{i},
\end{split}
\end{equation}
and a standard antisymmetric matrix $J=\begin{bmatrix}
 0 &I_{n}\\ -I_{n} &0\end{bmatrix}$ for $k=1,2,\cdots$. It is proved that the difference scheme (\ref{1.2}) is not only symplectic, but also of $2k$-$th$ order of accuracy (\cite{feng}).

Stochastic Hamiltonian systems (SHSs) constitute a rather important class of stochastic systems having the  property of symplecticity (\cite{milbook,mil1,mil2}). The problem of constructing special numerical methods preserving the symplectic structure of SHSs is of great current interest (\cite{kloeden,milbook,mil1,mil2}). We consider the linear stochastic Hamiltonian systems (\ref{1.4}) of dimension $d = 2n$ in the sense of Stratonovich
\begin{equation}\label{1.4}
\begin{split}
&dP=-\frac{\partial H_0(P,Q)}{\partial Q}dt-\sum_{i=1}^m \frac{\partial H_i(P,Q)}{\partial Q}\circ dW^{i}(t), \quad P(t_0)=p,\\
&dQ=\frac{\partial H_0(P,Q)}{\partial P}dt+\sum_{i=1}^m \frac{\partial H_i(P,Q)}{\partial P}\circ dW^{i}(t), \quad\quad Q(t_0)=q,
\end{split}
\end{equation}
and stochastic Hamiltonian systems with additive noises in the sense of It$\hat{o}$ (\ref{1.4.5})
\begin{equation}\label{1.4.5}
\begin{split}
&d\tilde{P}=-\frac{\partial \tilde{H}_0(\tilde{P},\tilde{Q})}{\partial \tilde{Q}}dt-\sum_{i=1}^m \frac{\partial \tilde{H}_i(\tilde{P},\tilde{Q})}{\partial \tilde{Q}}dW^{i}(t), \quad \tilde{P}(t_0)=\tilde{p},\\
&d\tilde{Q}=\frac{\partial \tilde{H}_0(\tilde{P},\tilde{Q})}{\partial \tilde{P}}dt+\sum_{i=1}^m \frac{\partial \tilde{H}_i(\tilde{P},\tilde{Q})}{\partial \tilde{P}}dW^{i}(t), \quad\quad \tilde{Q}(t_0)=\tilde{q},
\end{split}
\end{equation}
where $P$, $Q$, $\tilde{P}$, $\tilde{Q}$, $p$, $q$, $\tilde{p}$ and $\tilde{q}$ are n-dimensional column-vectors with the components $P_{l}$, $Q_{l}$, $\tilde{P}_{l}$, $\tilde{Q}_{l}$, $p$, $q$, $\tilde{p}_{l}$, $\tilde{q}_{l}$, $l=1,\cdots,n$, and $(W^{1}(t),\cdots,W^{m}(t)$ is $m$-n-dimensional standard Wiener
process. In the stochastic differential equations (\ref{1.4}) and (\ref{1.4.5}), $H_{i}(P,Q)$, $i=0,\cdots,m$, $\tilde{H}_0(\tilde{P},\tilde{Q})$ are of quadratic forms, i.e.
\begin{equation*}
\label{1.5}
H_{i}(P,Q)=\frac{1}{2}(P^{\top},Q^{\top})C^{i}
\begin{pmatrix}
P \\ Q
\end{pmatrix},\quad \tilde{H}_0(\tilde{P},\tilde{Q})=\frac{1}{2}(\tilde{P}^{\top},\tilde{Q}^{\top})\tilde{C}^0
\begin{pmatrix}
\tilde{P} \\ \tilde{Q}
\end{pmatrix},
\end{equation*}
where $C^{i}$, $i=0,\cdots,m$ and $\tilde{C}^0$ are $2n\times 2n$ symmetric matrices. $$\tilde{H}_{i}=<\tilde{C}^{i}_{1},  \tilde{P}>-<\tilde{C}^{i}_{2}, \tilde{Q}>,\quad i=1,\cdots,m$$ where $\tilde{C}^{i}_{1}$, $\tilde{C}^{i}_{2}$, $i=1,\cdots,m$ are $n$-dimensional constant column-vectors . In addition, we suppose that the coefficients
of the LSHS  are sufficiently smooth functions defined for $(t; p,q)\in [t_0, t_0 + T]\times R^{2n}$ which guarantee the existence and the uniqueness of the solution in the interval $[t_{0},t_{0}+T]$ (see \cite{mao,oksen}).

Let $X(t;t_0,p,q)$ 
= $(P(t;t_0,p,q)^{\top},Q(t;t_0,p,q)^{\top})^{\top}$, $t_0\leq t\leq t_0 + T$ and $Z(t;t_0,\tilde{p},\tilde{q})$= $(\tilde{P}(t;t_0,p,q)^{\top}$,$\tilde{Q}(t;t_0,\tilde{p},\tilde{q})^{\top})^{\top}$, $t_0\leq t\leq t_0 + T$ be the solutions of the linear stochastic Hamiltonian system (\ref{1.4}) and the linear stochastic Hamiltonian system with additive noises (\ref{1.4.5}) respectively. A more detailed notation is $X(t; t_0, p, q, \omega)$ and $Z(t; t_0, \tilde{p}, \tilde{q}, \omega)$, where $\omega$ is an elementary event. It is known that $X(t; t_0,p,q)$ and $Z(t; t_0,\tilde{p},\tilde{q})$ are phase flows (diffeomorphism) for almost every $\omega$, properties of which can be seen in e.g., \cite{Ikeda}. Moreover, we denote by $X_{k}=(P_{k}^{\top},Q_{k}^{\top})^{\top}$, $k=0,\cdots,N$ the numerical method for (\ref{1.4}) and by $Z_{k}=(\tilde{P}_{k}^{\top},\tilde{Q}_{k}^{\top})^{\top}$, $k=0,\cdots,N$ the numerical method for (\ref{1.4.5}), respectively, and $h=t_{k+1}-t_{k}$, $t_{N}=t_0+T$.

As the class of the symplectic difference schemes (\ref{1.2}) for the deterministic Hamiltonian systems (\ref{1.1}) have been proposed, it is meaningful to investigate, whether we could extend this approach to stochastic context and construct the symplectic numerical methods for stochastic Hamiltonian systems (\ref{1.4}) and (\ref{1.4.5}) by using the Pad$\acute e$ approximation. An outline of this paper is as follows: Section 2 is devoted to numerical methods $X^{(r,s)}(t+h;t,p,q)$, $m=1,2,3,4$ via using the Pad$\acute e$ approximation for linear stochastic Hamiltonian systems and a review of well known facts concerning some properties of the infinitesimal symplectic matrices. In Section 3, some numerical schemes $Z(t+h;t,\tilde{p},\tilde{q})$ based on the Pad$\acute e$. approximation  $P_{(\hat{r},\hat{s})}$ and $P_{(\check{r},\check{s})}$ are proposed for the special stochastic Hamiltonian systems with additive noises. In Section 4, we investigate the mean-square convergence of the proposed numerical approximations and prove that some of them preserve symplectic structure under appropriate conditions. Section 5 gives numerical tests. At last, Section 6 is a brief conclusion

\section{Symplectic numerical methods for linear Stochastic Hamiltonian systems}
\subsection{Linear Stochastic Hamiltonian systems(LSHS)}
\label{2}
Using $\nabla H_{i}(X)=C^iX=JA^{i}X$, $i=0,1,\cdots,m$, the canonical system (\ref{1.4}) becomes
\begin{equation}\label{2.1}
dX(t)=A^0Xdt+\sum_{i=1}^m A^{i}X\circ dW^{i}(t),\quad X(t_0)=(p,q)^{\top},
\end{equation}
the unique solution which is as follows:
\begin{equation}\label{2.2}
X(t)=\exp \left[(t-t_0)A^0+\sum_{i=1}^m (W^{i}(t)-W^{i}(t_0))A^{i}\right]X(t_0),
\end{equation}
with $ t_0\leq t\leq t_0 + T$. 
Denoting
\begin{gather*}
A^{i}=
\begin{bmatrix}
A^{i}_1& A^{i}_2 \\ A^{i}_3 & A^{i}_4,
\end{bmatrix}, i=0,1,\cdots,m,
\end{gather*}
where $A^{i}_{j}$, $i=0,\cdots, m$, $j=1,2,3,4$ are $n\times n$ constant matrices, and substituting the symmetric matrices $A^{i}$ into the right side of  (\ref{2.1}), we obtain
\begin{equation}\label{2.3}
\begin{split}
&dP=(A_1^0P+A_2^0Q)dt+\sum_{i=1}^m (A_{1}^{i}P+A_2^{i}Q)\circ dW^{i}(t),\quad P(t_0)=p,\\
&dQ=(A_3^0P+A_4^0Q)dt+\sum_{i=1}^m (A_{3}^{i}P+A_4^{i}Q)\circ dW^{i}(t),\quad Q(t_0)=q.
\end{split}
\end{equation}
Since $JA^{i}$, $i=0,1,\cdots,m$ are symmetric matrices, it is easy to check that $A^{i}_{1}=-A^{i\top}_{4}$, $A^{i}_{2}=A^{i\top}_{2}$, $A^{i}_{3}=A^{i\top}_{3}$, $i=0,\cdots, m$.

According to the fundamental theorem of Hamiltonian systems, the solution of a Hamiltonian system is a one-parameter symplectic group $G_{t}$ whose elements are called symplectic matrices, denoted by $Sp(2n)$. Therefore, the symplectic geometry serves as the mathematical foundation of Hamiltonian mechanics. Before stating the numerical methods for LSHS, let us introduce
some properties of infinitesimal symplectic matrices , which will be used in the proof of our main results Theorem \ref{th4} and Theorem \ref{th6}.
\begin{definition}
(\mbox{\cite{feng}}) A matrix B of order 2n is called symplectic, i.e. $B\in Sp(2n)$, if
\begin{equation*}
B^{\top}JB= J.
\end{equation*}
where $B^{\top}$ is the transpose of $B$. All symplectic matrices form a symplectic group $Sp(2n)$.
\end{definition}
\begin{definition}
({\mbox{\cite{feng}}}) A matrix B of order 2n is called infinitesimal symplectic, if
\begin{equation*}
JB + B^{\top}J = O.
\end{equation*}
All infinitesimal symplectic matrices form a Lie algebra with commutation operation $[A,B]= AB-BA$, denoted as $sp(2n)$. $sp(2n)$ is the Lie algebra of the Lie group $Sp(2n)$.
\end{definition}

\begin{remark}
The matrices $A^{i}$, $i=0,1,\cdots,m$, in (\ref{2.1}) are infinitesimal symplectic matrices.
\end{remark}
\begin{theorem}\label{th0.51}
(\cite{feng}) If $f(x)$ is an even polynomial, and $B\in sp(2n)$, then
\begin{equation*}
f(B^{\top})J =Jf(B).
\end{equation*}
\end{theorem}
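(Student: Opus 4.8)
The plan is to reduce the claim to a statement about individual powers of $B$ and then exploit the defining relation of the Lie algebra $sp(2n)$. Since $B\in sp(2n)$ means $JB+B^{\top}J=O$, the basic identity I would start from is
\begin{equation*}
B^{\top}J=-JB.
\end{equation*}
Writing the even polynomial as $f(x)=\sum_{k\geq 0}a_{k}x^{2k}$, by linearity it suffices to prove that $(B^{\top})^{2k}J=JB^{2k}$ for every $k$, and then sum these identities against the coefficients $a_{k}$.

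First I would establish, by induction on $j$, the sign-tracking identity
\begin{equation*}
(B^{\top})^{j}J=(-1)^{j}JB^{j},\qquad j=0,1,2,\dots
\end{equation*}
The base cases $j=0$ and $j=1$ are immediate from $J=J$ and $B^{\top}J=-JB$. For the inductive step I would compute $(B^{\top})^{j+1}J=B^{\top}\bigl((B^{\top})^{j}J\bigr)$, substitute the inductive hypothesis, and then move the leading factor $B^{\top}$ past $J$ using the defining relation once more; each application of $B^{\top}J=-JB$ contributes one factor of $-1$, which is exactly what promotes $(-1)^{j}$ to $(-1)^{j+1}$.

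With this identity in hand, the conclusion is immediate: for the even exponent $j=2k$ the sign $(-1)^{2k}=1$ collapses, giving $(B^{\top})^{2k}J=JB^{2k}$. Summing $a_{k}(B^{\top})^{2k}J=a_{k}JB^{2k}$ over $k$ and factoring $J$ out on the right yields $f(B^{\top})J=Jf(B)$.

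I do not anticipate a genuine obstacle; the argument is a clean finite induction requiring no analytic input. The only point demanding care is the bookkeeping of signs, and in particular recognizing that the evenness of $f$ is precisely the hypothesis that annihilates the parity factor $(-1)^{j}$: an odd polynomial would instead produce $f(B^{\top})J=-Jf(B)$, so the assumption cannot be dropped. If desired, one could phrase the whole computation without the intermediate lemma by noting $(B^{\top})^{2}J=B^{\top}(B^{\top}J)=-B^{\top}JB=JB^{2}$ and iterating, but the inductive formulation makes the role of the parity most transparent.
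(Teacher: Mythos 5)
Your argument is correct and complete: the induction $(B^{\top})^{j}J=(-1)^{j}JB^{j}$ follows cleanly from the defining relation $B^{\top}J=-JB$, and restricting to even exponents kills the sign, giving the claim by linearity. The paper itself offers no proof of this statement, merely citing the reference \cite{feng}, so there is nothing to compare against; your proof is the standard one and correctly isolates the role of the parity hypothesis (noting that odd polynomials would instead yield $g(B^{\top})J=-Jg(B)$, which is exactly the content of the companion Theorem \ref{th0.52}).
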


\begin{theorem}\label{th0.52}
(\cite{feng}) If $g(x)$ is an odd polynomial, and $B\in sp(2n)$, then $g(B)\in sp(2n)$, i.e.,
\begin{equation*}
g(B^{\top})J + Jg(B) = O.
\end{equation*}
\end{theorem}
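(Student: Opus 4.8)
The plan is to reduce everything to a single algebraic relation between the powers of $B$ and $B^{\top}$ that follows from the defining identity $JB + B^{\top}J = O$ of $sp(2n)$. First I would rewrite this defining identity as $B^{\top}J = -JB$, which is exactly the case $j=1$ of the general claim
\begin{equation*}
(B^{\top})^{j}J = (-1)^{j}JB^{j}, \qquad j = 0,1,2,\dots.
\end{equation*}
I would establish this claim by induction on $j$. The base case $j=0$ is the trivial identity $J = J$. For the inductive step, assuming $(B^{\top})^{j}J = (-1)^{j}JB^{j}$, I would compute
\begin{equation*}
(B^{\top})^{j+1}J = B^{\top}\big((B^{\top})^{j}J\big) = (-1)^{j}B^{\top}JB^{j} = (-1)^{j}(-JB)B^{j} = (-1)^{j+1}JB^{j+1},
\end{equation*}
where the third equality uses $B^{\top}J = -JB$. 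This closes the induction.

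Second, I would invoke the fact that transposition commutes with forming polynomials in a single matrix, so that $g(B)^{\top} = g(B^{\top})$; in particular the two formulations in the statement, $g(B)\in sp(2n)$ and $g(B^{\top})J + Jg(B) = O$, are equivalent. Writing the odd polynomial as $g(x) = \sum_{k}c_{k}x^{2k+1}$ and applying the power identity term by term with $j = 2k+1$ odd, so that $(-1)^{j} = -1$, I would obtain
\begin{equation*}
g(B^{\top})J = \sum_{k}c_{k}(B^{\top})^{2k+1}J = \sum_{k}c_{k}(-1)\,JB^{2k+1} = -J\sum_{k}c_{k}B^{2k+1} = -Jg(B),
\end{equation*}
which is exactly $g(B^{\top})J + Jg(B) = O$, as required.

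I do not anticipate a genuine obstacle here: the entire content is the power identity $(B^{\top})^{j}J = (-1)^{j}JB^{j}$, and the odd parity of $g$ is precisely what makes the sign $(-1)^{j}$ collapse to $-1$ uniformly across all surviving terms. (The same identity with even $j$, where $(-1)^{j} = +1$, yields the companion statement of Theorem \ref{th0.51}.) The only point demanding a little care is the bookkeeping that $g$ contains only odd powers, so that no even-power term spoils the uniform sign; once the induction is in hand, the remainder is immediate.
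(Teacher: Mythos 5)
Your proof is correct. The paper itself supplies no proof of this theorem (it is quoted from \cite{feng}), but your argument --- the induction establishing $(B^{\top})^{j}J=(-1)^{j}JB^{j}$ from the defining relation $B^{\top}J=-JB$, followed by summing over the odd-degree terms of $g$ --- is exactly the standard one, and as you note the even-$j$ case of the same identity gives Theorem \ref{th0.51}.
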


\begin{theorem}\label{th0.53}
(\cite{feng}) Matrices
$S = M^{-1}N\in Sp(2n)$, iff
\begin{equation}\label{2.4}
MJM^{\top}= NJN^{\top}.
\end{equation}
\end{theorem}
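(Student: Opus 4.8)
The plan is to reduce the symplectic condition defining $Sp(2n)$ to an equivalent ``transposed'' form and then to substitute $S=M^{-1}N$ directly into it. Throughout I treat $M$ as invertible, so that $S=M^{-1}N$ is well defined; note that if (\ref{2.4}) holds then taking determinants gives $(\det N)^2\det J=(\det M)^2\det J$, so $N$ is automatically invertible as well, keeping the two directions consistent.

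First I would record the elementary algebra of the standard structure matrix, namely $J^{\top}=-J$ and $J^{2}=-I$, whence $J^{-1}=-J=J^{\top}$. I would also observe that any $S$ with $S^{\top}JS=J$ is invertible, since taking determinants yields $(\det S)^{2}=1$. The key auxiliary fact is the transpose-invariance of the symplectic condition: for invertible $S$,
\begin{equation*}
S^{\top}JS=J\quad\Longleftrightarrow\quad SJS^{\top}=J.
\end{equation*}
To prove the forward implication I start from $S^{\top}JS=J$, left- and right-multiply by $(S^{\top})^{-1}$ and $S^{-1}$ to obtain $(S^{\top})^{-1}JS^{-1}=J$, then invert both sides and use $J^{-1}=-J$ to recover $SJS^{\top}=J$; the converse is identical by symmetry.

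With this equivalence available, the rest is a direct substitution. Writing $S=M^{-1}N$ I compute
\begin{equation*}
SJS^{\top}=M^{-1}N\,J\,N^{\top}(M^{\top})^{-1},
\end{equation*}
so the symplectic condition $SJS^{\top}=J$ becomes $M^{-1}\,(NJN^{\top})\,(M^{\top})^{-1}=J$. Conjugating by $M$ — that is, left-multiplying by $M$ and right-multiplying by $M^{\top}$ — turns this into $NJN^{\top}=MJM^{\top}$, which is exactly (\ref{2.4}). Chaining this with the transpose-invariance equivalence gives $S=M^{-1}N\in Sp(2n)\iff MJM^{\top}=NJN^{\top}$, as claimed.

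I expect the only genuinely delicate step to be the transpose-invariance of the symplectic condition; the substitution and the conjugation by $M$ are routine bookkeeping. The remaining care is simply to keep track of the standing invertibility hypotheses so that all the inverses written down actually exist.
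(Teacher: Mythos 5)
Your proof is correct and complete. Note that the paper itself gives no proof of this theorem at all --- it is imported verbatim from the reference \cite{feng} as a known auxiliary fact --- so there is no in-paper argument to compare against. Your route is the standard one: the transpose-invariance $S^{\top}JS=J \iff SJS^{\top}=J$ (proved by inverting and using $J^{-1}=-J$) is exactly the lemma needed to make the substitution $S=M^{-1}N$ work, and your determinant observations correctly handle the invertibility of $S$ and of $N$ in the respective directions, so the equivalence is established without gaps.
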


\subsection{Constructing numerical methods based on Pad$\acute{e}$ approximation}

As is well known, the exponential function $\exp{(M)}$ for $n\times n$ dimensional matrix $M$ has the Taylor's expansion
\begin{equation}\label{3.6}
\exp{(M)}=I+\sum_{i=1}^{+\infty} \frac{M^{i}}{i!},
\end{equation}
and can be approximated by Pad$\acute{e}$ approximation as follows
\begin{equation}\label{3.7}
\exp{(M)} \sim P_{(r,s)}(M)= D_{(r,s)}^{-1}(M)N_{(r,s)}(M),
\end{equation}
with $$N_{(r,s)}(M)=I+\sum_{i=1}^{r} \frac{(r+s-i)!r!}{(r+s)!i!(r-i)!}M^{i}=I+\sum_{i=1}^{r}a_iM^{i},$$, $$D_{(r,s)}(M)=I+\sum_{i=1}^{s} \frac{(r+s-i)!s!}{(r+s)!i!(s-i)!}(-M)^{i}=I+\sum_{i=1}^{s}b_i(-M)^{i}.$$ In addition, (\ref{3.7}) makes the following equation holds (\cite{feng})
\begin{equation}\label{3.8}
\exp{(M)}-P_{(r,s)}(M)=c_{r+s+1}M^{r+s+1}+\sum_{i=r+s+2}^{+\infty}c_{i}M^{i}=O(M^{r+s+1}).
\end{equation}
where $c_{i}$, $i\geq{r+s+1}$ are constants.

Based on the Pad$\acute e$ approximation $P_{(r,s)}[(t_{n+1}-t_{n})A^0+\sum_{i=1}^{m}(W^{i}(t_{n+1})-W^{i}(t_{n}))A^{i}]$, we construct the following numerical methods for linear stochastic Hamiltonian systems (\ref{2.1})
\begin{equation}
\label{3.9}
\hat{X}_{n+1}^{(r,s)}=\left[I+\sum_{j=1}^{s} b_j(-B)^{j}\right]^{-1}\left[I+\sum_{j=1}^{r} a_j(B)^{j}\right]\hat{X}_{n}^{(r,s)},
\end{equation}
where $B=hA^0+\sum_{i=1}^{m}\Delta W^{i}_{n}A^{i}$
and $\Delta W^{i}_{n}=W^{i}(t_{n+1})-W^{i}(t_{n})$ are the increments of Wiener processes which can be substituted by $\sqrt{h}\xi^{i}$ and $\xi^{i}\sim N(0,1)$, $i=1,\cdots,m$ are independent random variables. 

Similar to the discussion in (\cite{mil2}), we truncate $\xi^{i}$ to another random variable $\zeta_{h}^{i}$ which is bounded. In detail,
\begin{equation*}\label{3.5}
\zeta_{h}^{i}=\left\{\begin{array}{l}\ \xi^{i},\,\,\,\,\mbox{if}\,\,\,\,|\xi^{i}| \leq A_{h},\\ \ A_{h},\,\,\,\,\mbox{if}\,\,\,\, \xi^{i} > A_{h},\\ \ -A_{h},\,\,\,\,\mbox{if}\,\,\,\, \xi^{i} < -A_{h},\end{array}\right.
\end{equation*}
where $A_{h}=\sqrt{2\ell|\ln h|}$, $\ell \geq 1$.
For the sake of simplicity, we denote the matrix polynomials $hA^0+{\mathbf{\sum}}_{i=1}^{m}\sqrt h \zeta_{h}^{i}A^{i}$ by $\bar B$, and the following numerical scheme:
\begin{equation}
\label{3.11}
X_{n+1}^{(r,s)}={\left[I+\sum_{j=1}^{s} b_j(-\bar B)^j\right]}^{-1}\left[I+\sum_{j=1}^{r} a_j\bar B ^j\right]X_{n}^{(r,s)}.
\end{equation}
with $A_{h}^{i}=\sqrt{2\ell|\ln h|}$, $\ell \geq 1$. It is interesting to observe that if $r=s=1$, we reattain the Euler centered scheme
\begin{equation}
\label{3.12}
X_{n+1}^{(1,1)}=X_{n}^{(1,1)}+\frac{1}{2}\bar B(X_{n}^{(1,1)}+X_{n+1}^{(1,1)}).
\end{equation}
If both $r$ and $s$ take the value 2, we obtain the scheme
\begin{equation}
\label{3.13}
X_{n+1}^{(2,2)}=X_{n}^{(2,2)}+\frac{1}{2}\bar B(X_{n}^{(2,2)}+X_{n+1}^{(2,2)})+\frac{1}{12}\bar B^2(X_{n}^{(2,2)}-X_{n+1}^{(2,2)}).
\end{equation}
Similarly, if $r=s$ equal 3 and 4, the following two kinds of numerical approximations arise, respectively,
\begin{equation}
\label{3.14}
X_{n+1}^{(3,3)}=X_{n}^{(3,3)}+(\frac{1}{2}\bar B+\frac{1}{120}\bar B^3)(X_{n}^{(3,3)}+X_{n+1}^{(3,3)})+\frac{1}{10}\bar B^2(X_{n}^{(3,3)}-X_{n+1}^{(3,3)}).
\end{equation}
\begin{equation}
\label{3.15}
X_{n+1}^{(4,4)}=X_{n}^{(4,4)}+(\frac{1}{2}\bar B+\frac{1}{84}\bar B^3)(X_{n}^{(4,4)}+X_{n+1}^{(4,4)})+(\frac{1}{24}\bar B^2+\frac{1}{1680}\bar B^4)(X_{n}^{(4,4)}-X_{n+1}^{(4,4)}).
\end{equation}
In Section \ref{4}, the properties of the methods $(\ref{3.12})\sim (\ref{3.15})$ will be analyzed.
\section{Symplectic numerical methods for stochastic Hamiltonian systems with additive noises}
\label{3}

We now turn our attention to stochastic Hamiltonian systems with additive noises (\ref{1.4.5}) mentioned in Section \ref{1},
\begin{equation*}
\begin{split}
&d\tilde{P}=-\frac{\partial \tilde{H}_0(\tilde{P},\tilde{Q})}{\partial \tilde{Q}}dt-\sum_{i=1}^m \frac{\partial \tilde{H}_i(\tilde{P},\tilde{Q})}{\partial \tilde{Q}}dW^{i}(t), \quad \tilde{P}(t_0)=\tilde{p},\\
&d\tilde{Q}=\frac{\partial \tilde{H}_0(\tilde{P},\tilde{Q})}{\partial \tilde{P}}dt+\sum_{i=1}^m \frac{\partial \tilde{H}_i(\tilde{P},\tilde{Q})}{\partial \tilde{P}}dW^{i}(t), \quad\quad \tilde{Q}(t_0)=\tilde{q},
\end{split}
\end{equation*}
where
$$\tilde{H}_0(\tilde{P},\tilde{Q})=\frac{1}{2}(\tilde{P}^{\top},\tilde{Q}^{\top})\tilde{C}^0
\begin{pmatrix}
\tilde{P} \\ \tilde{Q}
\end{pmatrix},\quad
\tilde{C}^0=\tilde{C}^{0\top},$$ and  $$\tilde{H}_{i}=<\tilde{C}^{i}_{1},  \tilde{P}>-<\tilde{C}^{i}_{2}, \tilde{Q}>,\quad i=1,\cdots,m.$$
By $\nabla \tilde{H}_0(Z)=\tilde{C}^{0}Z$ and $\nabla \tilde{H}_{i}(Z)=R_{i}={(\tilde{C}_{1}^{i\top},-\tilde{C}_{2}^{i\top})}^{\top}$, the stochastic Hamiltonian system with additive noises (\ref{1.4.5}) becomes
\begin{equation}\label{6.1}
dZ(t)=J^{-1}\tilde{C}^{0}Zdt+\sum_{i=1}^m J^{-1}R_{i}dW^{i}(t),\quad Z(t_0)=(\tilde{p},\tilde{q})^{\top},
\end{equation}
the exact solution of which is as follows:
\begin{equation}\label{6.2}
Z(t)=e
^{(t-t_0)J^{-1}\tilde{C}^{0}}Z(t_0)+\sum_{i=1}^m \int_{t_0}^{t}e^{(t-\theta)J^{-1}\tilde{C}^{0}}J^{-1}R_{i}dW^{i}(\theta),
\end{equation}
with $ t_0\leq t\leq t_0 + T$. 

An approach for the construction of the numerical schemes for stochastic differential equations (\ref{6.1}) is to replace the matrix exponential by the Pad$\acute e$ approximation. Approximating the matrix exponential $e
^{(t-t_0)J^{-1}\tilde{C}^{0}}$ and $e^{(t-s)J^{-1}\tilde{C}^{0}}$ by the Pad$\acute e$ approximations $P_{(\hat{r},\hat{s})}[(t_{n+1}-t_{n})J^{-1}\tilde{C}^{0}]$ and $P_{(\check{r},\check{s})}[(t_{n+1}-s)J^{-1}\tilde{C}^{0}]$ respectively, we obtain the numerical methods as follows
\begin{equation}\label{6.3}
\begin{split}
Z_{n+1}=&\left[I+\sum_{j=1}^{\hat{s}} b_j(-B_1)^{j}\right]^{-1}\left[I+\sum_{j=1}^{\hat{r}} a_j(B_1)^{j}\right]Z_{n}\\
&+\sum_{i=1}^m \int_{t_{n}}^{t_{n+1}}\left[I+\sum_{j=1}^{\check{s}} b_j(-B_2)^{j}\right]^{-1}\left[I+\sum_{j=1}^{\check{r}} a_j(B_2)^{j}\right]J^{-1}R_{i}dW^{i}(\theta),
\end{split}
\end{equation}
where $B_1=hJ^{-1}\tilde{C}^{0}$ and $B_2=(t_{n+1}-\theta)J^{-1}\tilde{C}^{0}$ with $\hat{r}+\hat{s}=\check r+\check s+2$ and $\check r, \check s\geq 1$. In particular, if $\hat{r}=\hat{s}$, the one-step approximation is symplectic which will be proved in Section \ref{4}.

Approximating It$\hat{o}$ integral in (\ref{6.2}) by the left-rectangle formula and the Pad$\acute e$ approximation $P_{(1,1)}(hJ^{-1}\tilde{C}^0)$, we obtain the explicit numerical scheme
\begin{equation}\label{6.4}
\begin{split}
Z_{n+1}=&\left[I+\sum_{j=1}^{\hat{s}} b_j(-B_1)^{j}\right]^{-1}\left[I+\sum_{j=1}^{\hat{r}} a_j(B_1)^{j}\right]Z_{n}\\
&+\sum_{i=1}^m\left[I-\frac{1}{2}B_1\right]^{-1}\left[I+\frac{1}{2}B_1\right]J^{-1}R_{i}\Delta W^{i}_{n},
\end{split}
\end{equation}
with every $\hat{s}$, $\hat{r}$ $\geq 1$. Similar to the special case of the numerical scheme (\ref{6.3}),  the one-step approximation (\ref{6.4}) preserves symplecticity if $\hat{r}=\hat{s}$. Moreover, we can prove that the numerical method (\ref{6.4}) is of mean-square order 1.

\section{Properties of the numerical methods based on the Pad$\acute{e}$ approximation}
\label{4}
In this section, we prove the mean-square convergence order of the proposed schemes $(\ref{3.12})\sim(\ref{3.15})$ and (\ref{6.3}) based on the Pad$\acute{e}$ approximation, as well as their symplecticity. For the former purpose, we need the fundamental convergence theorem in stochastic context given by G. N. Milstein and M.V. Tretyakov(\cite{mil1,mil2,milbook}).
\begin{Proposition}\label{th1}
(see \cite{milbook,mil1,mil2}) Suppose the one-step approximation $\bar{X}(t+h;t,x)$ has the order of accuracy $p_{1}$ for the mathematical expectation of the deviation and order of accuracy $p_{2}$ for the mean-square deviation; more precisely, for arbitrary $t_{0}\leq t\leq t_{0}+T-h$, $x\in R^{n}$ the following inequalities hold:
\begin{equation}\label{3.1}
\begin{array}{l}
  |{\bf {E}}(X(t+h;t,x)-\bar{X}(t+h;t,x))|\leq K{(1+|x|^2)}^{\frac{1}{2}}h^{p_1},\\
  {\Bigl[{\bf {E}}{|X(t+h;t,x)-\bar{X}(t+h;t,x)|}^{2}\Bigr]}^{\frac{1}{2}}\leq K{(1+|x|^2)}^{\frac{1}{2}}h^{p_2},
\end{array}
\end{equation}
Also, let
\begin{equation*}\label{3.2}
{p_{2}}\geq {\frac{1}{2}}, {p_{1}}\geq {{p_{2}}+{\frac{1}{2}}},
\end{equation*}
Then for any $N$ and $k=0,\cdots,N$ the following inequality holds:
\begin{equation}\label{3.3}
{\Bigl[{\bf {E}}{|X(t_{k};t_0,X_0)-\bar{X}(t_{k};t_0,X_0)|}^{2}\Bigr]}^{\frac{1}{2}}\leq K{(1+|X_0|^2)}^{\frac{1}{2}}h^{p_2-\frac{1}{2}},
\end{equation}
i.e. the order of accuracy of the method constructed using the one-step approximation $\bar{X}(t+h;t,x)$ is $p=p_2-\frac{1}{2}$.
\end{Proposition}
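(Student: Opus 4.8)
The plan is to run the classical Milstein accumulation argument: control the global error by summing the local one-step errors along the grid, using the flow property of the exact solution and a discrete Gronwall inequality. Write $X_k=X(t_k;t_0,X_0)$ for the exact solution at the grid points and $\bar X_k$ for the method, set $R_k=\bar X_k-X_k$ with $R_0=0$, and use $X_{k+1}=X(t_{k+1};t_k,X_k)$ to decompose
\begin{equation*}
R_{k+1}=\underbrace{\bigl[\bar X(t_{k+1};t_k,\bar X_k)-X(t_{k+1};t_k,\bar X_k)\bigr]}_{\delta_{k+1}}+\underbrace{\bigl[X(t_{k+1};t_k,\bar X_k)-X(t_{k+1};t_k,X_k)\bigr]}_{Y_k},
\end{equation*}
where $\delta_{k+1}$ is the genuine one-step error, to which (\ref{3.1}) applies with $x=\bar X_k$, and $Y_k$ is the propagation of the accumulated error $R_k$ through the exact flow.

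First I would record two auxiliary estimates, both consequences of the smoothness/Lipschitz hypotheses on the coefficients together with It\^o's isometry and Gronwall's inequality: a uniform moment bound $\mathbf{E}|\bar X_k|^2\le C(1+\mathbf{E}|X_0|^2)$, so that the factors $(1+|\bar X_k|^2)^{1/2}$ in (\ref{3.1}) may be replaced by $(1+\mathbf{E}|X_0|^2)^{1/2}$; and the mean-square dependence of the exact flow on its initial datum, namely $\mathbf{E}[\,|Y_k|^2\mid\mathcal{F}_{t_k}]\le(1+Ch)\,|R_k|^2$ and $\mathbf{E}[\,|Y_k-R_k|^2\mid\mathcal{F}_{t_k}]\le Ch\,|R_k|^2$. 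The second of these, which says that over one step the exact flow moves the error by only an $O(\sqrt h)$ amount, is what ultimately balances the orders.

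Then I would expand $\mathbf{E}|R_{k+1}|^2=\mathbf{E}|Y_k|^2+2\,\mathbf{E}(Y_k\cdot\delta_{k+1})+\mathbf{E}|\delta_{k+1}|^2$. The direct terms give $\mathbf{E}|Y_k|^2\le(1+Ch)\mathbf{E}|R_k|^2$ and $\mathbf{E}|\delta_{k+1}|^2\le Ch^{2p_2}(1+\mathbf{E}|X_0|^2)$, so the crux is the cross term. Splitting $\delta_{k+1}=\mathbf{E}[\delta_{k+1}\mid\mathcal{F}_{t_k}]+\delta_{k+1}^{0}$ into its conditional mean (of order $h^{p_1}$ by the first line of (\ref{3.1})) and its conditionally centred part (of mean-square order $h^{p_2}$ by the second), the mean part is handled by conditioning and Young's inequality, producing $Ch\,\mathbf{E}|R_k|^2$ and $Ch^{2p_1-1}(1+\mathbf{E}|X_0|^2)$. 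For the centred part I would write $Y_k=R_k+(Y_k-R_k)$: the term $\mathbf{E}(R_k\cdot\delta_{k+1}^{0})$ vanishes because $R_k$ is $\mathcal{F}_{t_k}$-measurable and $\delta_{k+1}^{0}$ is conditionally centred, while $\mathbf{E}((Y_k-R_k)\cdot\delta_{k+1}^{0})$ is bounded by Cauchy--Schwarz with $Ch^{p_2+1/2}(\mathbf{E}|R_k|^2)^{1/2}(1+\mathbf{E}|X_0|^2)^{1/2}$, hence by Young with $Ch\,\mathbf{E}|R_k|^2+Ch^{2p_2}(1+\mathbf{E}|X_0|^2)$. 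Collecting everything yields
\begin{equation*}
\mathbf{E}|R_{k+1}|^2\le(1+Ch)\,\mathbf{E}|R_k|^2+C(1+\mathbf{E}|X_0|^2)\bigl(h^{2p_2}+h^{2p_1-1}\bigr),
\end{equation*}
and iterating from $R_0=0$ over the $N=T/h$ steps via discrete Gronwall gives $\mathbf{E}|R_k|^2\le C(1+\mathbf{E}|X_0|^2)(h^{2p_2-1}+h^{2p_1-2})$, so that $p_1\ge p_2+\tfrac12$ makes the first term dominant and produces (\ref{3.3}) with $p=p_2-\tfrac12$.

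The main obstacle, as flagged, is exactly the cross term $\mathbf{E}(Y_k\cdot\delta_{k+1})$: a naive Cauchy--Schwarz bound on the correlated centred parts would yield only an $O(1)$, rather than $O(h)$, multiplier of $\mathbf{E}|R_k|^2$, whereupon the accumulated factor $(1+O(1))^N$ blows up. The decomposition $Y_k=R_k+(Y_k-R_k)$, in which the $\mathcal{F}_{t_k}$-measurable leading part is orthogonal to the centred one-step error and the remainder is an extra $\sqrt h$ smaller, is what salvages the $O(h)$ multiplier; simultaneously it makes transparent why the expectation of the local deviation must be a half-order more accurate than its mean square, i.e. why the hypothesis $p_1\ge p_2+\tfrac12$ is the precise requirement for the global order to be $p_2-\tfrac12$.
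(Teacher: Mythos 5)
The paper does not prove this proposition at all: it is quoted verbatim from the cited references (Milstein and Tretyakov) as the ``fundamental convergence theorem'' and used as a black box in Theorems 4.1 and 4.2. Your argument is, in substance, exactly the proof given in those sources: the decomposition of the global error $R_{k+1}$ into the local one-step error $\delta_{k+1}$ evaluated at $\bar X_k$ plus the propagation $Y_k$ of $R_k$ through the exact flow, the two flow estimates $\mathbf{E}[|Y_k|^2\mid\mathcal{F}_{t_k}]\le(1+Ch)|R_k|^2$ and $\mathbf{E}[|Y_k-R_k|^2\mid\mathcal{F}_{t_k}]\le Ch|R_k|^2$, the splitting of $\delta_{k+1}$ into its conditional mean and a conditionally centred part so that the dangerous cross term $\mathbf{E}(R_k\cdot\delta^0_{k+1})$ vanishes by $\mathcal{F}_{t_k}$-measurability, and the discrete Gronwall iteration yielding $h^{2p_2-1}+h^{2p_1-2}$, whence $p_1\ge p_2+\tfrac12$ gives global order $p_2-\tfrac12$. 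This is correct, and your closing remark correctly identifies why the half-order gap between $p_1$ and $p_2$ is exactly what the orthogonality trick requires. The only ingredients you state without proof are the uniform moment bound $\mathbf{E}|\bar X_k|^2\le C(1+\mathbf{E}|X_0|^2)$ for the numerical solution and the conditional applicability of (\ref{3.1}) at the random point $x=\bar X_k$; both are standard lemmas accompanying the theorem in the cited books (and are unproblematic for the linear systems considered here), so flagging them as auxiliary estimates, as you do, is acceptable.
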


We note that all the constants $K$ mentioned above, as well as the ones that will appear in the sequels, depend on the system and the approximation only and do not depend on $X_0$ and $N$. 
\begin{Proposition}\label{th2}
(see \cite{milbook,mil1,mil2})Let the one-step approximation $\bar{X}(t+h;t,x)$ satisfy the condition of Theorem \ref{th1}. Suppose that ${\tilde{X}}(t+h;t,x)$ is such that
\begin{equation}\label{3.4}
\begin{array}{l}
  |{\bf {E}}(\bar{X}(t+h;t,x)-\tilde{X}(t+h;t,x))|= O(h^{p_1}),\\
  {\Bigl[{\bf {E}}{|\bar{X}(t+h;t,x)-\tilde{X}(t+h;t,x)|}^{2}\Bigr]}^{\frac{1}{2}}= O(h^{p_2}),
\end{array}
\end{equation}
with the same $h^{p_1}$ and $h^{p_2}$. Then the method based on the one-step approximation ${\tilde{X}}(t+h;t,x)$ has the same mean-square order of accuracy as the method based on ${\bar{X}}(t+h;t,x)$, i.e., its order is equal to $p=p_2-\frac{1}{2}$.
\end{Proposition}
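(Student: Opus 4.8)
The plan is to reduce the statement to a single application of Proposition \ref{th1}: rather than re-running the global error accumulation, I would show that the perturbed one-step approximation $\tilde{X}(t+h;t,x)$ itself satisfies the two local estimates (\ref{3.1}) with the \emph{same} exponents $p_1$ and $p_2$ as $\bar{X}$. Once that is established, the hypotheses $p_2\ge\frac12$ and $p_1\ge p_2+\frac12$ are automatically inherited, so Proposition \ref{th1} applies verbatim to $\tilde{X}$ and delivers global mean-square order $p=p_2-\frac12$, identical to the method built on $\bar{X}$. The entire argument rests on the fact that both error functionals in (\ref{3.1}) obey a triangle inequality, so the deviation of $\tilde X$ from the exact flow may be routed through the intermediate object $\bar X$.

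First I would treat the expectation of the deviation. Writing $X-\tilde{X}=(X-\bar{X})+(\bar{X}-\tilde{X})$ and using linearity of the expectation with the Euclidean triangle inequality gives
\[
|\mathbf{E}(X(t+h;t,x)-\tilde{X}(t+h;t,x))|\le|\mathbf{E}(X-\bar{X})|+|\mathbf{E}(\bar{X}-\tilde{X})|.
\]
The first summand is bounded by $K(1+|x|^2)^{1/2}h^{p_1}$ because $\bar{X}$ satisfies the conditions of Proposition \ref{th1}, while the second is $O(h^{p_1})$ by the hypothesis (\ref{3.4}); adding them reproduces a bound of the required form. For the mean-square deviation the relevant triangle inequality is Minkowski's inequality in $L^2(\Omega)$: with $\|\cdot\|_2=(\mathbf{E}|\cdot|^2)^{1/2}$,
\[
\|X-\tilde{X}\|_2\le\|X-\bar{X}\|_2+\|\bar{X}-\tilde{X}\|_2,
\]
where the first term is $\le K(1+|x|^2)^{1/2}h^{p_2}$ by assumption and the second is $O(h^{p_2})$ by (\ref{3.4}). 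Thus $\tilde{X}$ inherits both inequalities in (\ref{3.1}) and the conclusion follows.

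The only genuine obstacle is the bookkeeping of the constants hidden in the $O(h^{p_1})$ and $O(h^{p_2})$ symbols of (\ref{3.4}). To add them cleanly to the $\bar{X}$-estimates and absorb the result into a single constant $K$ of the shape demanded by (\ref{3.1}), one must read these $O$-terms as carrying constants with the same polynomial growth $(1+|x|^2)^{1/2}$ in the initial datum $x$; otherwise the combined bound would not be uniform in $x$ in the manner Proposition \ref{th1} requires. This is precisely the convention adopted throughout the Milstein--Tretyakov framework, and once it is made explicit the two local estimates combine without further difficulty and the proof is complete.
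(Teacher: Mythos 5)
Your argument is correct: the paper states this proposition without proof (it is quoted from the cited Milstein--Tretyakov works), and your reduction --- routing $X-\tilde{X}$ through $\bar{X}$ via the triangle inequality for $|\mathbf{E}(\cdot)|$ and Minkowski's inequality in $L^2(\Omega)$, so that $\tilde{X}$ inherits the local estimates (\ref{3.1}) and Proposition \ref{th1} applies directly --- is exactly the standard proof given in those references. Your closing remark about reading the $O(h^{p_1})$, $O(h^{p_2})$ constants as growing at most like $(1+|x|^2)^{1/2}$ is the right caveat and is indeed the convention intended here.
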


Our result regarding the mean-square convergence order of the schemes $(\ref{3.12})\sim(\ref{3.15})$ and (\ref{6.3}) is as follows.
\begin{theorem}\label{th3}
The numerical methods $X^{(r,s)}_{n}$ e.g. (\ref{3.12})$\sim$(\ref{3.15}) based on the Pad$\acute{e}$ approximation $P_{(r,s)}$ with $A_{h}^{i}=\sqrt{2\ell|\ln h|}$, $\ell \geq r+s$ is of mean-square order $\frac{r+s}{2}$.
\end{theorem}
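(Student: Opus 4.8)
The plan is to obtain the statement from the two convergence results already quoted: Proposition \ref{th1} reduces the global mean-square order to the two local (one-step) error exponents $p_1$ (for the mean of the deviation) and $p_2$ (for the mean-square deviation), while Proposition \ref{th2} lets me trade the idealised one-step map for the implementable truncated one. The exact one-step solution is, by (\ref{2.2}), $X(t+h;t,x)=\exp(B)x$ with $B=hA^0+\sum_{i=1}^m\sqrt h\,\xi^i A^i$, and the untruncated Pad\'e one-step map is $\hat X^{(r,s)}(t+h;t,x)=P_{(r,s)}(B)x$. Subtracting and inserting the Pad\'e remainder (\ref{3.8}) gives
\begin{equation*}
X(t+h;t,x)-\hat X^{(r,s)}(t+h;t,x)=\Bigl(c_{r+s+1}B^{r+s+1}+\sum_{i\ge r+s+2}c_iB^i\Bigr)x,
\end{equation*}
so the whole local analysis is reduced to estimating moments of powers of $B$.

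For the mean-square exponent I note that each stochastic factor $\sqrt h\,\xi^i A^i$ contributes a factor $h^{1/2}$ in every $L^p$-norm (all Gaussian moments being finite) and each deterministic factor $hA^0$ a factor $h$; hence $\|B^{j}x\|_{L^2}=O(h^{j/2})(1+|x|^2)^{1/2}$ and the leading term $B^{r+s+1}$ dominates. This yields
\begin{equation*}
\bigl[\mathbf E|X(t+h;t,x)-\hat X^{(r,s)}(t+h;t,x)|^2\bigr]^{1/2}\le K(1+|x|^2)^{1/2}h^{\frac{r+s+1}{2}},
\end{equation*}
i.e. $p_2=\tfrac{r+s+1}{2}$.

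For the first-moment exponent I would expand $B^{r+s+1}$ into monomials in $A^0,A^1,\dots,A^m$ and take expectations. A monomial carrying $b$ stochastic factors has weight $h^{(r+s+1)-b/2}$ and random part $\mathbf E[\xi^{i_1}\cdots\xi^{i_b}]$, which vanishes for odd $b$ because the $\xi^i$ are independent centred Gaussians (Wick/Isserlis). For the symmetric approximants $r=s$ that underlie (\ref{3.12})--(\ref{3.15}) the degree $r+s$ is even, so $r+s+1$ is odd and the largest admissible even value of $b$ is $r+s$; every surviving monomial therefore contains at least one deterministic factor, and the leading surviving weight is $h^{(r+s+2)/2}$. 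Thus $p_1=\tfrac{r+s+2}{2}=p_2+\tfrac12$. The pair $(p_1,p_2)$ satisfies $p_2\ge\tfrac12$ and $p_1\ge p_2+\tfrac12$, so Proposition \ref{th1} gives mean-square order $p_2-\tfrac12=\tfrac{r+s}{2}$.

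The last step is to replace $\xi^i$ by the bounded truncation $\zeta_h^i$, i.e. to pass from $\hat X^{(r,s)}$ to the scheme (\ref{3.11}); this is also what makes $D_{(r,s)}(\bar B)$ invertible with a controlled inverse, so that the one-step map has the finite moments the theory needs. Writing $\hat X^{(r,s)}-X^{(r,s)}=[P_{(r,s)}(B)-P_{(r,s)}(\bar B)]x$, this difference is supported on the rare event $\mathcal A=\{\,\exists\,i:\,|\xi^i|>A_h\,\}$. The Gaussian tail bound $\mathbf P(|\xi^i|>A_h)\le\exp(-A_h^2/2)=h^{\ell}$, together with H\"older's inequality separating the polynomially growing integrand from $\mathbf 1_{\mathcal A}$, shows that for $\ell\ge r+s$ both the mean and the mean-square truncation errors are of higher order than $h^{p_1}$ and $h^{p_2}$; Proposition \ref{th2} then transfers the order $\tfrac{r+s}{2}$ to $X^{(r,s)}$. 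The main obstacle is precisely this interplay: the convergence theorem requires the sharp inequality $p_1\ge p_2+\tfrac12$, which holds here only because the odd Gaussian moments cancel (forcing $r+s$ even, automatic for $r=s$), and the rigorous control of the rational matrix function $[D_{(r,s)}(\bar B)]^{-1}$ on the truncation event, where one must verify that the discarded contribution is genuinely negligible, is the most delicate part of the bookkeeping.
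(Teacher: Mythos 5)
Your overall strategy (local estimates for $p_1$ and $p_2$, Proposition \ref{th1} for the global order, Proposition \ref{th2} to pass to the truncated scheme) is the same as the paper's, and your parity bookkeeping for $p_1$ --- odd Gaussian moments vanish, so for $r=s$ the leading remainder term $B^{r+s+1}$ has zero mean and $p_1=p_2+\tfrac12$ --- is exactly the mechanism the paper exploits to get $p_1=\left[\tfrac{r+s}{2}\right]+1$. But there is a genuine gap in your choice of intermediate approximation. You compare the exact solution with $\hat X^{(r,s)}=P_{(r,s)}(B)x$ where $B$ contains the \emph{untruncated} Gaussian increments, and you invoke the remainder expansion (\ref{3.8}) to reduce everything to moments of powers of $B$. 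That expansion is a power series with a finite radius of convergence (determined by the zeros of $\det D_{(r,s)}$), so it is only valid on the event $\{\|B\|<\rho\}$; since the $\xi^i$ are Gaussian, $D_{(r,s)}(B)$ is singular with positive probability, $P_{(r,s)}(B)x$ is not even defined almost surely, and $\mathbf{E}\,|X-\hat X^{(r,s)}|^2$ need not be finite. One cannot rescue the estimate by summing the series termwise either: the coefficients $c_i$ in (\ref{3.8}) decay at best geometrically (they carry the rational part of $P_{(r,s)}$), while $\bigl(\mathbf{E}\|B^i\|^2\bigr)^{1/2}$ grows like $h^{i/2}C^i\sqrt{i!}$ from the Gaussian moments, so $\sum_i|c_i|\,h^{i/2}C^i\sqrt{i!}$ diverges. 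You do flag the invertibility of the denominator as ``the most delicate part,'' but you locate the difficulty at the final truncation step, whereas it already invalidates your definition of the intermediate object and both of its moment bounds.

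The paper avoids this by ordering the steps differently: it truncates first and uses the \emph{polynomial} Taylor map $\hat X=x+\sum_{j=1}^{r+s}\tfrac1{j!}\bar B^j x$ (with the bounded $\zeta_h^i$) as the intermediate approximation. The comparison of $X$ with $\hat X$ then involves only the entire exponential series (whose tail does converge in $L^2$ against Gaussians) plus a finite sum of differences $\mathbf{E}\bigl(({\xi^i})^\rho-({\zeta_h^i})^\rho\bigr)$ controlled by the tail bound $\exp(-A_h^2/2)\le h^{\ell}$ with $\ell\ge r+s$; only afterwards is the Pad\'e remainder (\ref{3.8}) applied, to compare $\hat X$ with $X^{(r,s)}=P_{(r,s)}(\bar B)x$, where $\|\bar B\|=O(\sqrt{h|\ln h|})$ is deterministically small so the series converges and $D_{(r,s)}(\bar B)$ is invertible. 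To repair your argument you would have to either adopt this ordering or condition explicitly on the event $\{\max_i|\xi^i|\le A_h\}$ before ever writing $P_{(r,s)}(B)$; as written, the two displayed local estimates for $X-\hat X^{(r,s)}$ are not justified.
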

\begin{proof}
According to an analog of the Taylor expansion of the solution $X(t+h;t,x)$ in (\ref{2.2}), we obtain a one-step approximation $\hat {X}(t+h;t,x)$ as follows
\begin{equation}
\label{4.1}
\begin{split}
\hat X&=x+\sum_{j=1}^{r+s} \frac{1}{j!}(hA^0+\sum_{i=1}^{m}\sqrt h  \zeta_{h}^{i}A^{i})^{j}x\\
&=x+\sum_{j=1}^{r+s} \frac{1}{j!}\bar B^{j}x,
\end{split}
\end{equation}
which has the $\frac{r+s}{2}$-$th$ mean-square order of convergence according to the Proposition \ref{th1} by verifying the $|{\bf {E}}(X-\hat X)|$ and ${\bf {E}}|(X-\hat X)|^2$.
First,
\begin{equation*}
\label{4.1051}
\begin{split}
    &|{\bf {E}}(X-\hat X)|\\
    \leq &K|\sum_{j=1}^{r+s} \frac{1}{j!}[{\bf {E}}(hA^0+\sum_{i=1}^{m}\sqrt h  \xi^{i}A^{i})^{j}-{\bf {E}}(hA^0+\sum_{i=1}^{m}\sqrt h  \zeta_{h}^{i}A^{i})^{j}]|\\
    +&K|\sum_{j=r+s+1}^{+\infty}\frac{1}{j!}{\bf {E}}(hA^0+\sum_{i=1}^{m}\sqrt h \xi^{i}A^{i})^{j}|\\
    \leq &K\sum_{j=1}^{r+s}\sum_{i=1}^{m} \frac{1}{j!}\sum_{\rho=0}^{j}h^{j-\rho+\frac{\rho}{2}}|{\bf {E}}({\xi^{i}}^\rho-{\zeta_{h}^{i}}^\rho)|+O(h^{\left[\frac{r+s}{2}\right]+1}).
\end{split}
\end{equation*}
According to the distribution function of the random variable $\xi^{i}$, we have
\begin{equation*}
\begin{split}
    &|{\bf {E}}(X-\hat X)|\\
    \leq&K\sum_{j=1}^{r+s}\sum_{i=1}^{m} \frac{1}{j!}\sum_{\rho=even}^{j}h^{j-\rho+\frac{\rho}{2}}| \int_{A_h}^{+\infty}(x^\rho-A_{h}^\rho)\exp (-\frac{x^2}{2})dx|+O(h^{\left[\frac{r+s}{2}\right]}+1)\\
    \leq&K\sum_{j=1}^{r+s} \frac{1}{j!}\sum_{\rho=even,\geq 2}^{j}h^{j-\rho+\frac{\rho}{2}}\sum_{u=1}^{\rho-1}A_h^u\exp (-\frac{A_h^2}{2})\int_{0}^{+\infty}x^{\rho-u}\exp(-\frac{x^2}{2})dx+O(h^{\left[\frac{r+s}{2}\right]+1}).
\end{split}
\end{equation*}
From the condition $A_{h}^{2}\geq 2\ell|\ln h|$ which implies $\exp (-\frac{A_h^2}{2})\leq h^{l}$, we can get
\begin{equation*}
    |{\bf {E}}(X-\hat X)|
    \leq K\sum_{j=1}^{r+s} \frac{1}{j!}\sum_{\rho=even}^{j}h^{j-\rho+\frac{1}{2}+\ell}+O(h^{\left[\frac{r+s}{2}\right]+1})
    =O(h^{\left[\frac{r+s}{2}\right]+1}).
\end{equation*}
Secondly, we estimate ${\bf {E}}|(X-\hat X)|^2$,
\begin{equation}
\label{4.1052}
\begin{split}
    &{\bf {E}}|(X-\hat X)|^2\\
    \leq &K{\bf {E}}|\sum_{j=1}^{r+s} \frac{1}{j!}[(hA^0+\sum_{i=1}^{m}\sqrt h  \xi^{i}A^{i})^{j}-(hA^0+\sum_{i=1}^{m}\sqrt h  \zeta_{h}^{i}A^{i})^{j}]|^2\\
    +&K{\bf {E}}|\sum_{j=r+s+1}^{+\infty}\frac{1}{j!}(hA^0+\sum_{i=1}^{m}\sqrt h \xi^{i}A^{i})^{j}|^2\\
    \leq &K\sum_{j=1}^{r+s}\sum_{i=1}^{m} \frac{1}{j!}\sum_{\rho=0}^{j}h^{2j-\rho}{\bf {E}}|({\xi^{i}}^\rho-{\zeta_{h}^{i}}^\rho)|^2+O(h^{r+s+1})\\
    \leq&K\sum_{j=1}^{r+s}\sum_{i=1}^{m} \frac{1}{j!}\sum_{\rho=even}^{j}h^{2j-\rho}| \int_{A_h}^{+\infty}(x^\rho-A_{h}^\rho)^2\exp (-\frac{x^2}{2})dx|+O(h^{r+s+1})\\
    \leq&K\sum_{j=1}^{r+s} \frac{1}{j!}\sum_{\rho=even,\geq 0}^{j}h^{2j-\rho}\sum_{u=1}^{\rho-1}A_h^{2u}\exp (-\frac{A_h^2}{2})\int_{0}^{+\infty}x^{2\rho-2u}\exp(-\frac{x^2}{2})dx\\
    +&O(h^{r+s+1})\\
    \leq&K\sum_{j=1}^{r+s} \frac{1}{j!}\sum_{\rho=even}^{j}h^{2j-2\rho+1+\ell}+O(h^{r+s+1})\\
    =&O(h^{r+s+1}),
\end{split}
\end{equation}
Thus, the numerical scheme (\ref{4.1}) is of the mean-square order $\frac{r+s}{2}$. Recall the one-step approximation $X^{(r,s)}(t+h;t,x)$ (\ref{3.11}) we proposed
\begin{equation}
\label{4.2.5}
      X^{(r,s)}={\left[I+\sum_{j=1}^{s} b_j(-\bar B)^{j}\right]}^{-1}\left[I+\sum_{j=1}^{r} a_j\bar B^j\right]x,
\end{equation}
where $\bar B=hA^0+{\mathbf{\sum}}_{i=1}^{m}\sqrt h  \zeta_{h}^{i}A^{i}$. From (\ref{3.8}), we know that
\begin{equation}
\label{4.3}
\begin{split}
     \hat X-X^{(r,s)}&=x+\sum_{j=1}^{r+s} \frac{1}{j!}\bar B^{j}x-P_{(r,s)}(\bar B)x\\
     &=exp(\bar B)x-P_{(r,s)}(\bar B)x-\sum_{j=r+s+1}^{+\infty} \frac{1}{j!}\bar B^{j}x\\
     &=\sum_{j=r+s+1}^{+\infty} \bar c_{j}\bar B^{j}x,
\end{split}
\end{equation}
where $\bar c_{j}$, $j\geq r+s+1$, are constants. It is obvious that
\begin{equation}
\label{4.4}
    \begin{split}
    |{\bf {E}}(\hat X-X^{(r,s)})|
    &\leq K_1|{\bf {E}}\sum_{j=r+s+1}^{+\infty}(hA^0+{\mathbf{\sum}}_{i=1}^{m}\sqrt h  \zeta_{h}^{i}A^{i})^{j}|\\
    &=O(h^{\left[\frac{r+s}{2}\right]+1}),
    \end{split}
\end{equation}
and
\begin{equation}
\label{4.5}
    \begin{split}
    {{\bf {E}}|\hat X-\bar{X}^{(r,s)}|^{2}}
    &\leq K_1{\bf {E}}|\sum_{j=r+s+1}^{+\infty}(hA^0+{\mathbf{\sum}}_{i=1}^{m}\sqrt h\zeta_{h}^{i}A^{i})^{j}|^{2}\\
    &=O(h^{r+s+1}),
    \end{split}
\end{equation}
where $K_1$ is a sufficiently large constant. Applying Proposition \ref{th2}, we prove the theorem.\hspace*{1cm}
\end{proof}

\begin{theorem}
\label{th5}
The numerical method (\ref{6.3}) based on the Pad$\acute{e}$ approximation $P_{(\hat{r},\hat{s})}$ and $P_{(\check r,\check s)}$ as follows
\begin{equation*}
\begin{split}
Z_{n+1}=&\left[I+\sum_{j=1}^{\hat{s}} b_j(-B_1)^{j}\right]^{-1}\left[I+\sum_{j=1}^{\hat{r}} a_j(B_1)^{j}\right]Z_{n}\\
&+\sum_{i=1}^m \int_{t_{n}}^{t_{n+1}}\left[I+\sum_{j=1}^{\check{s}} b_j(-B_2)^{j}\right]^{-1}\left[I+\sum_{j=1}^{\check{r}} a_j(B_2)^{j}\right]J^{-1}R_{i}dW^{i}(\theta),
\end{split}
\end{equation*}
where $B_1=hJ^{-1}\tilde{C}^{0}$ and $B_2=(t_{n+1}-\theta)J^{-1}\tilde{C}^{0}$ with $\hat{r}+\hat{s}=\check r+\check s+2$ and $\check r, \check s\geq 1$ is of mean-square order $\check r+\check s+1$.
\end{theorem}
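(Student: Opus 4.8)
The plan is to apply the fundamental convergence theorem, Proposition~\ref{th1}, to the one-step approximation encoded in (\ref{6.3}), in the same spirit as the proof of Theorem~\ref{th3}. Writing the exact one-step solution obtained from (\ref{6.2}) as
\begin{equation*}
Z(t_{n+1};t_n,z)=e^{B_1}z+\sum_{i=1}^m \int_{t_n}^{t_{n+1}}e^{(t_{n+1}-\theta)J^{-1}\tilde{C}^0}J^{-1}R_i\,dW^i(\theta),
\end{equation*}
and the numerical one-step approximation $\bar Z(t_{n+1};t_n,z)$ as the right-hand side of (\ref{6.3}), I would subtract the two and note that the error separates cleanly into a deterministic drift part $\left(e^{B_1}-P_{(\hat r,\hat s)}(B_1)\right)z$ and a stochastic diffusion part $\sum_{i=1}^m\int_{t_n}^{t_{n+1}}\left(e^{(t_{n+1}-\theta)J^{-1}\tilde C^0}-P_{(\check r,\check s)}(B_2)\right)J^{-1}R_i\,dW^i(\theta)$, since the two Pad\'e factors in (\ref{6.3}) are designed precisely to approximate the two matrix exponentials. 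For $h$ small enough the arguments $B_1$ and $B_2$ are $O(h)$, so the Pad\'e denominators are invertible with uniformly bounded inverses and every matrix factor is controlled by constants depending only on $J^{-1}\tilde C^0$ and the $R_i$.

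For the order $p_1$ of the expected deviation, I would use that an It\^o integral of a deterministic integrand has zero mean, so $\mathbf{E}(Z-\bar Z)$ reduces to the drift part $\left(e^{B_1}-P_{(\hat r,\hat s)}(B_1)\right)z$. By the Pad\'e error identity (\ref{3.8}) this equals $O(B_1^{\hat r+\hat s+1})=O(h^{\hat r+\hat s+1})$, which gives the first inequality in (\ref{3.1}) with $p_1=\hat r+\hat s+1$; the factor $(1+|z|^2)^{1/2}$ appears because this term is linear in $z$.

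The crux is the mean-square estimate $p_2$. Using $|a+b|^2\le 2|a|^2+2|b|^2$ I would split $\mathbf{E}|Z-\bar Z|^2$ into twice the drift contribution, which is $O(h^{2(\hat r+\hat s+1)})$ and negligible, plus twice the diffusion contribution. The It\^o isometry, applied to each independent $W^i$ with its deterministic integrand, turns the latter into
\begin{equation*}
\sum_{i=1}^m\int_{t_n}^{t_{n+1}}\Bigl|\bigl(e^{(t_{n+1}-\theta)J^{-1}\tilde C^0}-P_{(\check r,\check s)}(B_2)\bigr)J^{-1}R_i\Bigr|^2\,d\theta.
\end{equation*}
The main technical point, and the step I expect to be the real obstacle, is that one must retain the \emph{pointwise} Pad\'e bound furnished by (\ref{3.8}), namely $e^{B_2}-P_{(\check r,\check s)}(B_2)=O(B_2^{\check r+\check s+1})=O\bigl((t_{n+1}-\theta)^{\check r+\check s+1}\bigr)$, rather than crudely replacing $(t_{n+1}-\theta)$ by $h$. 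Integrating the square of this bound over $[t_n,t_{n+1}]$ gives $\int_0^h u^{2(\check r+\check s+1)}\,du=O(h^{2\check r+2\check s+3})$, so the integration in $\theta$ supplies exactly one extra power of $h$. Hence $\left[\mathbf{E}|Z-\bar Z|^2\right]^{1/2}=O(h^{\check r+\check s+3/2})$, i.e.\ $p_2=\check r+\check s+\tfrac32$, with the $(1+|z|^2)^{1/2}$ factor trivial since the diffusion error does not involve $z$.

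Finally I would check the hypotheses of Proposition~\ref{th1}. By the balancing assumption $\hat r+\hat s=\check r+\check s+2$ we get $p_1=\check r+\check s+3$, so that $p_2=\check r+\check s+\tfrac32\ge\tfrac12$ (as $\check r,\check s\ge1$) and $p_1\ge p_2+\tfrac12$; indeed the constraint on the Pad\'e orders is exactly what secures this last inequality. Proposition~\ref{th1} then delivers mean-square order $p=p_2-\tfrac12=\check r+\check s+1$, which is the assertion of the theorem.
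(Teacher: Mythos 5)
Your proposal is correct and follows essentially the same route as the paper's proof: the same drift/diffusion decomposition, the Pad\'e error identity (\ref{3.8}) for the mean deviation, and the It\^o isometry with the pointwise bound $O\bigl((t_{n+1}-\theta)^{\check r+\check s+1}\bigr)$ integrated over $[t_n,t_{n+1}]$ to obtain $p_2=\check r+\check s+\tfrac32$, followed by Proposition~\ref{th1}. Your explicit verification of the hypothesis $p_1\ge p_2+\tfrac12$ is a point the paper leaves implicit, but otherwise the two arguments coincide.
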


\begin{proof}
It is known that the solution $Z(t+h;t,z)$ of (\ref{6.1}) is
\begin{equation}\label{7.1}
Z=e
^{hJ^{-1}\tilde{C}^{0}}z+\sum_{i=1}^m \int_{t}^{t+h}e^{(t+h-\theta)J^{-1}\tilde{C}^{0}}J^{-1}R_{i}dW^{i}(\theta),
\end{equation}
and the one-step approximation $\bar Z(t+h;t,z)$ (\ref{6.3}) is as follows
\begin{equation}
\label{7.2}
\bar Z=P_{\hat r,\hat s}(hJ^{-1}\tilde{C}^{0})z+\sum_{i=1}^m \int_{t}^{t+h}P_{\check r,\check s}((t+h-\theta)J^{-1}\tilde{C}^{0})J^{-1}R_{i}dW^{i}(\theta).
\end{equation}
By estimating  $|{\bf {E}}(Z-\bar Z)|$ and ${\bf {E}}|(Z-\bar Z)|^2$, we know that
\begin{equation}
\label{7.3}
    |{\bf {E}}(Z-\bar Z)|\leq K|e^{hJ^{-1}\tilde{C}^{0}}-P_{\hat r,\hat s}(hJ^{-1}\tilde{C}^{0})|=O(h^{\hat r+\hat s+1}).
\end{equation}
and
\begin{equation}
\label{7.4}
\begin{split}
    &{\bf {E}}|(Z-\bar Z)|^2\\
    \leq &K{\bf {E}}|e^{hJ^{-1}\tilde{C}^{0}}-P_{\hat r,\hat s}(hJ^{-1}\tilde{C}^{0})|^2\\
    +&K{\bf {E}}|\sum_{i=1}^m \int_{t}^{t+h}[e^{(t+h-\theta)J^{-1}\tilde{C}^{0}}-P_{\check r,\check s}((t+h-\theta)J^{-1}\tilde{C}^{0})]J^{-1}R_{i}dW^{i}(\theta)|^2.
\end{split}
\end{equation}
Due to the independence of the Winner processes and the isometry's property of the It$\hat o$ integral, the equation (\ref{7.4}) becomes
\begin{equation}
\label{7.5}
\begin{split}
    &{\bf {E}}|(Z-\bar Z)|^2\\
    \leq &K\sum_{i=1}^m \int_{t}^{t+h}{\bf {E}}| [e^{(t+h-\theta)J^{-1}\tilde{C}^{0}}-P_{\check r,\check s}((t+h-\theta)J^{-1}\tilde{C}^{0})]J^{-1}R_{i}|^2d\theta\\
    +&O(h^{2\hat{r}+2\hat{s}+2})\\
    \leq &K\sum_{i=1}^m \int_{t}^{t+h}{\bf {E}}| (t+h-\theta)^{\check r+\check s+1}|^2d\theta+O(h^{2\check{r}+2\check{s}+5})+O(h^{2\hat{r}+2\hat{s}+2})\\
    =&O(h^{2\check{r}+2\check{s}+3})
\end{split}
\end{equation}
Thus, according to Proposition \ref{th1}, the numerical scheme (\ref{6.3}) is of the mean-square order $\check r+\check s+1$.
\end{proof}

Now let us prove the symplecticity of the schemes
(\ref{3.12})$\sim$(\ref{3.15}) and (\ref{6.3}).

\begin{theorem}\label{th4}
The numerical methods $X^{(k,k)}_{n}$ e.g. (\ref{3.12})$\sim$(\ref{3.15}) based on the Pad$\acute{e}$ approximation are symplectic.
\end{theorem}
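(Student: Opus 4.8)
The plan is to reduce the symplecticity of the map to the single algebraic criterion of Theorem \ref{th0.53}. Each scheme $(\ref{3.12})$–$(\ref{3.15})$ advances the state linearly, $X_{n+1}^{(k,k)} = S\,X_n^{(k,k)}$ with $S = D^{-1}N$, where $N := N_{(k,k)}(\bar B) = I + \sum_{j=1}^{k} a_j \bar B^{j}$ and $D := D_{(k,k)}(\bar B) = I + \sum_{j=1}^{k} b_j(-\bar B)^{j}$. Since this coefficient matrix depends only on $\bar B$ (i.e.\ on the noise increments) and not on $X_n^{(k,k)}$, the Jacobian of the one step equals $S$ itself, so it suffices to prove $S \in Sp(2n)$, i.e.\ $S^{\top} J S = J$. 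First I would record that $\bar B = hA^0 + \sum_{i=1}^{m}\sqrt h\,\zeta_h^{i}A^{i}$ lies in $sp(2n)$: each $A^{i}$ is infinitesimal symplectic and $sp(2n)$ is a linear space, so every realization-wise fixed linear combination stays in $sp(2n)$.

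I would then invoke Theorem \ref{th0.53} with the factorization $S = D^{-1}N$, which turns the claim $S \in Sp(2n)$ into the matrix identity
\[ D J D^{\top} = N J N^{\top}. \]
The decisive structural feature of the diagonal case $r=s=k$ is that $a_j=b_j$, whence $D = N_{(k,k)}(-\bar B)$; that is, $D$ is obtained from $N$ simply by $\bar B \mapsto -\bar B$. Splitting $N$ into its even and odd polynomial parts, $N = E + O$ with $E = \sum_{j\ \mathrm{even}} a_j \bar B^{j}$ and $O = \sum_{j\ \mathrm{odd}} a_j \bar B^{j}$, therefore gives $D = E - O$.

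Next I would propagate $J$ through the transposes using the infinitesimal-symplectic identities. Since transposition commutes with polynomial evaluation, $E^{\top} = E(\bar B^{\top})$ and $O^{\top} = O(\bar B^{\top})$, so Theorem \ref{th0.51} applied to the even part yields $E^{\top}J = JE$ and Theorem \ref{th0.52} applied to the odd part yields $O^{\top}J = -JO$. Using $J^{-1} = -J$ these read $E^{\top} = -JEJ$ and $O^{\top} = JOJ$, hence
\[ N^{\top} = E^{\top}+O^{\top} = -JEJ + JOJ = -J(E-O)J = -JDJ, \qquad D^{\top} = E^{\top}-O^{\top} = -J(E+O)J = -JNJ. \]
Substituting these and using $J^{2} = -I$ collapses both sides of the target identity to
\[ N J N^{\top} = NDJ, \qquad D J D^{\top} = DNJ. \]
Thus $D J D^{\top} = N J N^{\top}$ is equivalent to $ND = DN$, which holds trivially because $N$ and $D$ are both polynomials in the single matrix $\bar B$. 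Theorem \ref{th0.53} then delivers $S \in Sp(2n)$, and the symplecticity of the scheme follows.

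I expect the only genuine obstacle to be the sign bookkeeping in the third step — correctly moving $J$ across the transposed even and odd blocks while keeping the relations $J^{-1}=-J$ and $J^{2}=-I$ straight — together with the observation that makes everything trivialize: once $D = N(-\bar B)$ and the even/odd decomposition are in place, the whole symplecticity statement reduces to the obvious commutativity of two polynomials in $\bar B$. This also explains transparently why the diagonal requirement $r=s$ is essential, since it is exactly what forces $a_j=b_j$, hence $D = N(-\bar B)$, and hence the cancellation of the cross terms.
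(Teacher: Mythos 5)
Your proof is correct and follows essentially the same route as the paper: both split $N_{(k,k)}$ and $D_{(k,k)}$ into even and odd parts of a polynomial in $\bar B\in sp(2n)$, apply Theorems \ref{th0.51} and \ref{th0.52}, exploit that polynomials in the same matrix commute, and conclude via Theorem \ref{th0.53}. The only cosmetic difference is that you verify $NJN^{\top}=DJD^{\top}$ by converting transposes into conjugation by $J$ and reducing to $ND=DN$, whereas the paper chains the equalities directly on $N^{\top}JN=D^{\top}JD$; the underlying mechanism is identical.
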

\begin{proof}
The proof of this theorem is a straight-forward extension of its counterpart in deterministic case in (\cite{feng}), and there is no essential difficulty arising from involving stochastic elements.

For the sake of simplicity, we also denote $\bar B=hA^0+{\mathbf{\sum}}_{i=1}^{m}\sqrt h\zeta_{h}^{i}A^{i}$ as above and consider the one-step approximation (\ref{3.11}) based on $P_{(k,k)}(\bar B)$ with $A_{h}^{i}=\sqrt{4k|\ln h|}$ as follows,
\begin{equation}
X_{n+1}^{(k,k)}={
\left[D_{(k,k)}(\bar B)\right]}^{-1}{\left[N_{(k,k)}(\bar B)\right]}X_{n}^{(k,k)}.
\end{equation}
where $$D_{(k,k)}(\bar B)=I+\sum_{j=1}^{k} a_j(-hA^0-{\mathbf{\sum}}_{i=1}^{m}\sqrt h\zeta_{h}^{i}A^{i})^{j},$$
and
$$N_{(k,k)}(\bar B)=I+\sum_{j=1}^{k} a_j(hA^0+{\mathbf{\sum}}_{i=1}^{m}\sqrt h\zeta_{h}^{i}A^{i})^{j}.$$
Let $N_{(k,k)}(\bar B)=F(\bar B)+G(\bar B)$, $D_{(k,k)}(\bar B)=F(\bar B)-G(\bar B)$, where $F(\bar B)$ is an even polynomial and $G(\bar B)$ is an odd polynomial. Since $\bar B\in sp(2n)$ , we get $F(\bar B^{\top})J=JF(\bar B)$ and $G(\bar B^{\top})J+JG(\bar B)=0$ according to the Theorem \ref{th0.51} and \ref{th0.52}. We want to verify the numerical method $X_{n+1}^{(k,k)}$ is symplectic, so we should prove that the matrix $D_{(k,k)}^{-1}N_{(k,k)}\in Sp(2n)$. Since
\begin{equation}
\label{4.6}
\begin{split}
     &N_{(k,k)}^{\top}JN_{(k,k)}\\
     =&(F(\bar B^{\top})+G(\bar B^{\top}))J(F(\bar B)+G(\bar B))\\
     =&J(F(\bar B)-G(\bar B))(F(\bar B)+G(\bar B))\\
     =&J(F(\bar B)+G(\bar B))(F(\bar B)-G(\bar B))\\
     =&(F(\bar B^{\top})-G(\bar B^{\top}))J(F(\bar B)-G(\bar B))\\
     =&D_{(k,k)}^{\top}JD_{(k,k)},
\end{split}
\end{equation}
From the Theorem \ref{th0.53} we know that  $D_{(k,k)}^{-1}N_{(k,k)}\in Sp(2n)$. Thus we prove the theorem.
\end{proof}

Using arguments similar to ones in the proof of Theorem {\ref{th4}}, we obtain the following theorem.

\begin{theorem}\label{th6}
If $\hat{r}$ equals $\hat{s}$, the numerical methods $Z_{n+1}$ (\ref{7.6}) as follows
\begin{equation}
\label{7.6}
Z_{n+1}=P_{\hat r,\hat r}(hJ^{-1}\tilde{C}^{0})Z_{n}+\sum_{i=1}^m \int_{t_{n}}^{t_{n}+h}P_{\check r,\check s}((t_{n}+h-\theta)J^{-1}\tilde{C}^{0})J^{-1}R_{i}dW^{i}(\theta),
\end{equation}
is symplectic.
\end{theorem}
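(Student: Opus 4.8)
The plan is to reduce the symplecticity of the one-step map (\ref{7.6}) to the symplecticity of a single constant matrix, after which I can reuse almost verbatim the even/odd decomposition argument from the proof of Theorem \ref{th4}.

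First I would recall that symplecticity of the numerical flow means $d\tilde P_{n+1}\wedge d\tilde Q_{n+1}=d\tilde P_n\wedge d\tilde Q_n$, with the differentials taken with respect to the initial data $Z_n$; equivalently, the Jacobian $\partial Z_{n+1}/\partial Z_n$ must lie in $Sp(2n)$. The decisive observation, and the only place where the additive structure of the noise enters, is that the stochastic integral term in (\ref{7.6}) does not depend on $Z_n$ at all, since the factors $R_i$, $J^{-1}$ and $\tilde C^0$ are constant. Hence the whole dependence of $Z_{n+1}$ on $Z_n$ is carried by the first summand, and
\begin{equation*}
\frac{\partial Z_{n+1}}{\partial Z_n}=P_{\hat r,\hat r}(hJ^{-1}\tilde C^0)=\left[D_{(\hat r,\hat r)}(B_1)\right]^{-1}N_{(\hat r,\hat r)}(B_1),\qquad B_1=hJ^{-1}\tilde C^0.
\end{equation*}
Thus it suffices to prove $P_{\hat r,\hat r}(B_1)\in Sp(2n)$.

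Next I would check that $B_1\in sp(2n)$. Since $\tilde C^0$ is symmetric and $J^{-1}=J^\top=-J$ with $J^2=-I$, a direct computation gives $JB_1=h\tilde C^0$ and $B_1^\top J=h\tilde C^0(J^{-1})^\top J=-h\tilde C^0$, so $JB_1+B_1^\top J=O$. With this in hand the remaining steps are identical to the proof of Theorem \ref{th4}: writing $N_{(\hat r,\hat r)}(B_1)=F(B_1)+G(B_1)$ and $D_{(\hat r,\hat r)}(B_1)=F(B_1)-G(B_1)$ with $F$ even and $G$ odd, Theorems \ref{th0.51} and \ref{th0.52} yield $F(B_1^\top)J=JF(B_1)$ and $G(B_1^\top)J+JG(B_1)=O$; the chain of identities in (\ref{4.6}) then gives $N_{(\hat r,\hat r)}^\top J N_{(\hat r,\hat r)}=D_{(\hat r,\hat r)}^\top J D_{(\hat r,\hat r)}$, and Theorem \ref{th0.53} delivers $D_{(\hat r,\hat r)}^{-1}N_{(\hat r,\hat r)}\in Sp(2n)$.

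I do not anticipate a genuine obstacle, since no new stochastic difficulty appears once the additive noise term is seen to drop out of the Jacobian. The one point deserving care is the role of the hypothesis $\hat r=\hat s$: it is exactly what forces $N_{(\hat r,\hat r)}$ and $D_{(\hat r,\hat r)}$ to share the same even part $F$ and opposite odd parts $\pm G$, so that the even/odd splitting and Theorems \ref{th0.51}--\ref{th0.52} apply. For $\hat r\neq\hat s$ this symmetry between numerator and denominator is lost and the argument would not go through; by contrast, the order-matching condition $\hat r+\hat s=\check r+\check s+2$ plays no part in symplecticity and is needed only for the convergence order of Theorem \ref{th5}.
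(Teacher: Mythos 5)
Your proposal is correct and follows essentially the same route as the paper: reduce symplecticity to the Jacobian $\partial Z_{n+1}/\partial Z_n=P_{\hat r,\hat r}(hJ^{-1}\tilde C^{0})$ (the additive-noise term being independent of $Z_n$), and then rerun the even/odd decomposition argument of Theorem \ref{th4}. Your explicit verification that $hJ^{-1}\tilde C^{0}\in sp(2n)$ is a detail the paper leaves implicit, but the argument is the same.
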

\begin{proof}
It is known that the one-step approximation (\ref{7.6})
is symplectic iff $${\frac{\partial Z_{n+1}}{\partial Z_{n}}}^{\top}J\frac{\partial Z_{n+1}}{\partial Z_{n}}=J.$$
Since $\frac{\partial Z_{n+1}}{\partial Z_{n}}=P_{\acute r,\acute r}(hJ^{-1}\tilde{C}^{0})$, the numerical method (\ref{7.6}) is symplectic iff $P_{\acute r,\acute r}\in Sp(2n)$. Repeating the proof as that of the Theorem \ref{th4}, we prove the Theorem.
\end{proof}
\section{Numerical tests}
\label{5}
\noindent{{\bf {Example 1}}}\quad We consider the system of SDEs in the sense of Stratonovich, i.e. the Kubo oscillator
\begin{equation}\label{5.1}
\begin{split}
&dP=-aQdt-\sigma Q\circ dW(t),\quad P(0)=p,\\
&dQ=aPdt+\sigma P\circ dW(t),\quad Q(0)=q,
\end{split}
\end{equation}
where a and $\sigma$ are constants and $W(t)$ is a one-dimensional standard Wiener process. The exact solution of (\ref{5.1}) is
\begin{equation}
\label{5.2}
\begin{split}
&P(t)=pcos(at+\sigma W(t))-qsin(at+\sigma W(t)),\\
&Q(t)=psin(at+\sigma W(t))+qcos(at+\sigma W(t)).
\end{split}
\end{equation}
The phase flow of this system preserves symplectic structure. Moreover, the quantity $H(p,q) = p^2+q^2,$ is conservative for this system, i.e.:
$$H(P(t),Q(t))= H(p,q),\quad \mbox{for}\quad t\geq0.$$
This means that the phase trajectory of (\ref{5.1}) is a circle centered at the origin.
\begin{figure*}[t]
\centering
\subfigure{
\includegraphics[width=6cm,height=5cm]{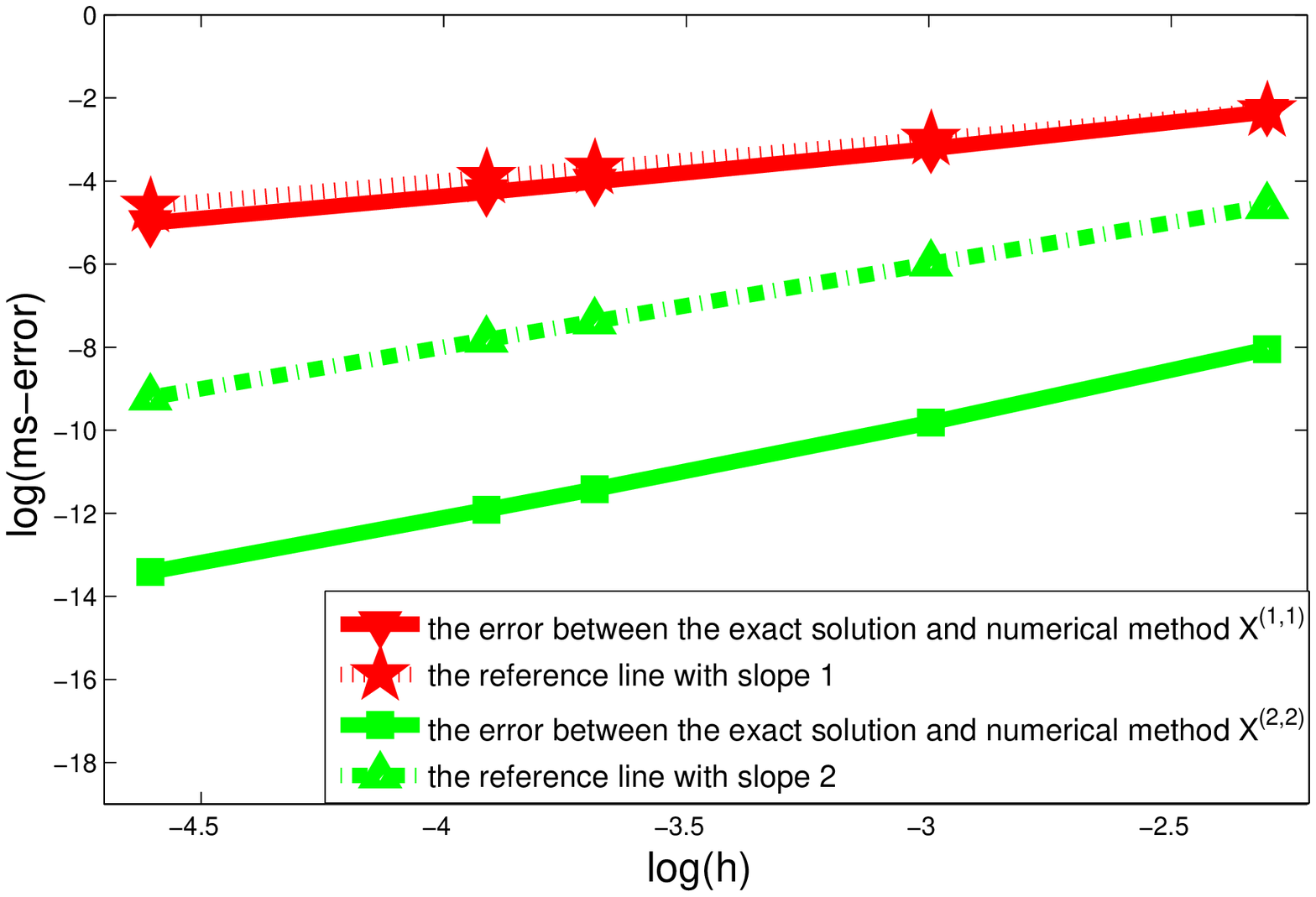}}
\subfigure{
\includegraphics[width=6cm,height=5cm]{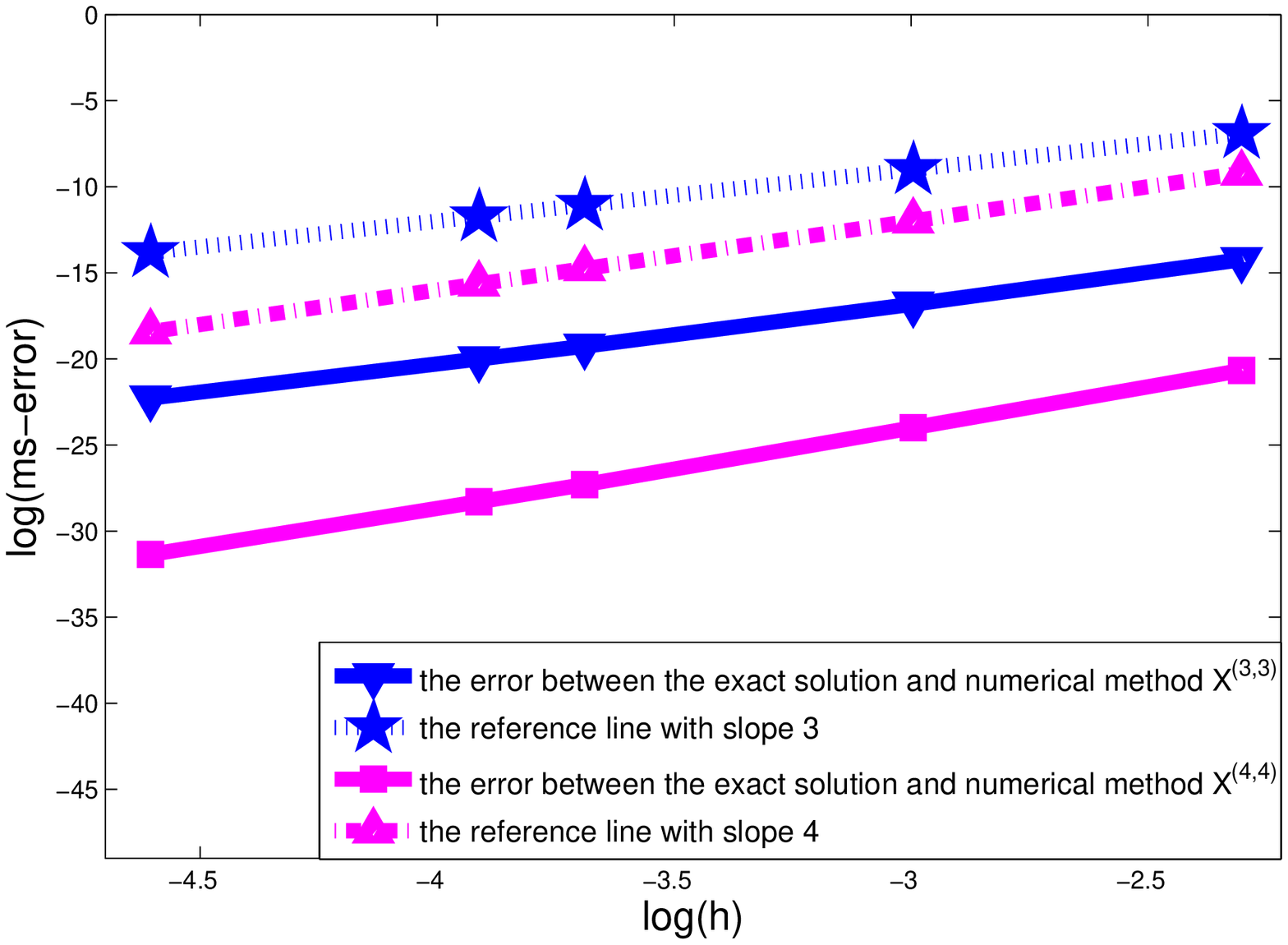}}
\caption{The mean-square convergence order of the scheme (\ref{5.3}) and (\ref{5.4}) (left), the mean-square convergence order of the scheme (\ref{5.5}) and (\ref{5.55}) (right).}\label{pp1}
\end{figure*}

\begin{figure*}[t]
\centering
\subfigure{
\includegraphics[width=6cm,height=5cm]{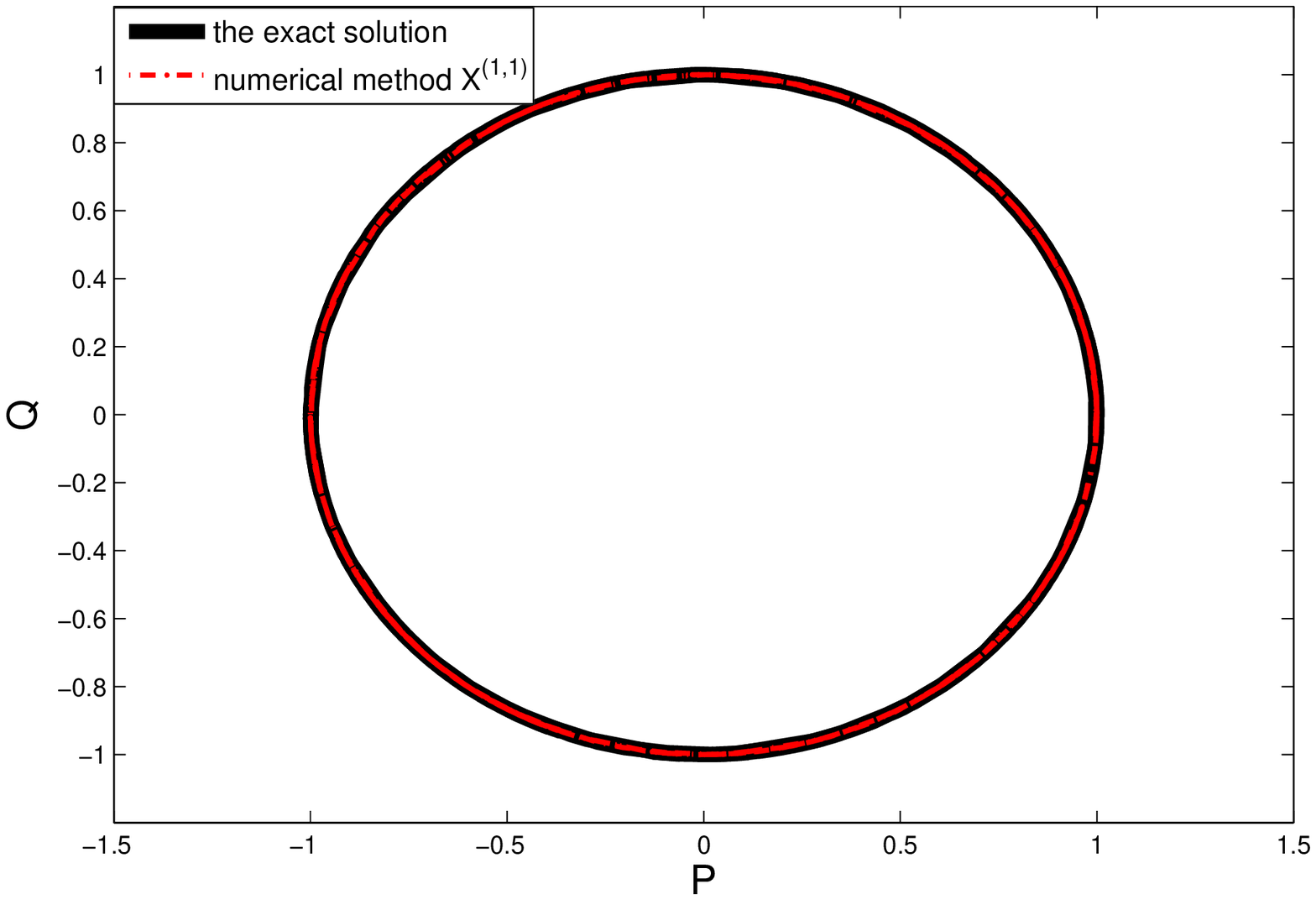}}
\subfigure{
\includegraphics[width=6cm,height=5cm]{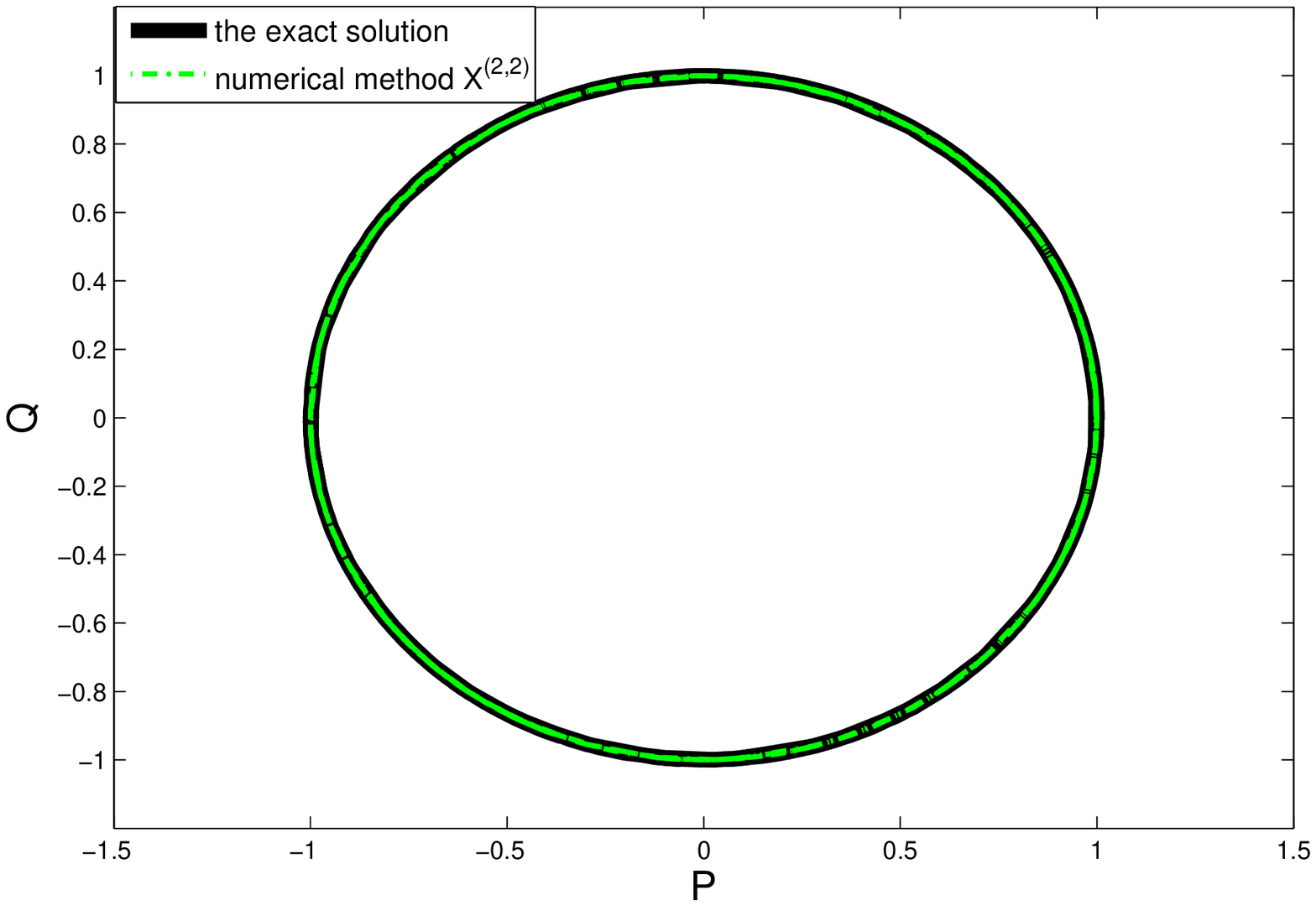}}
\subfigure{
\includegraphics[width=6cm,height=5cm]{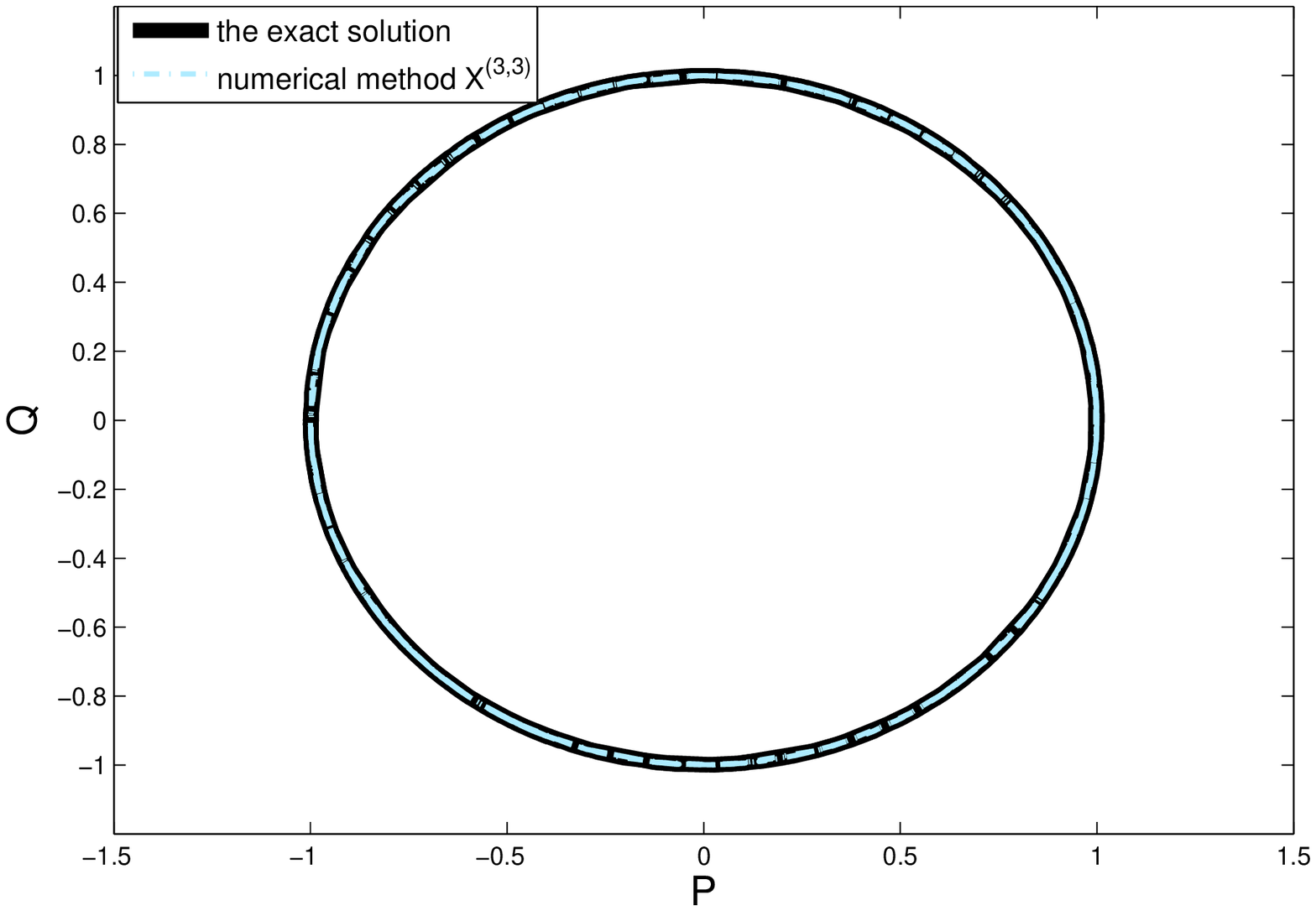}}
\subfigure{
\includegraphics[width=6cm,height=5cm]{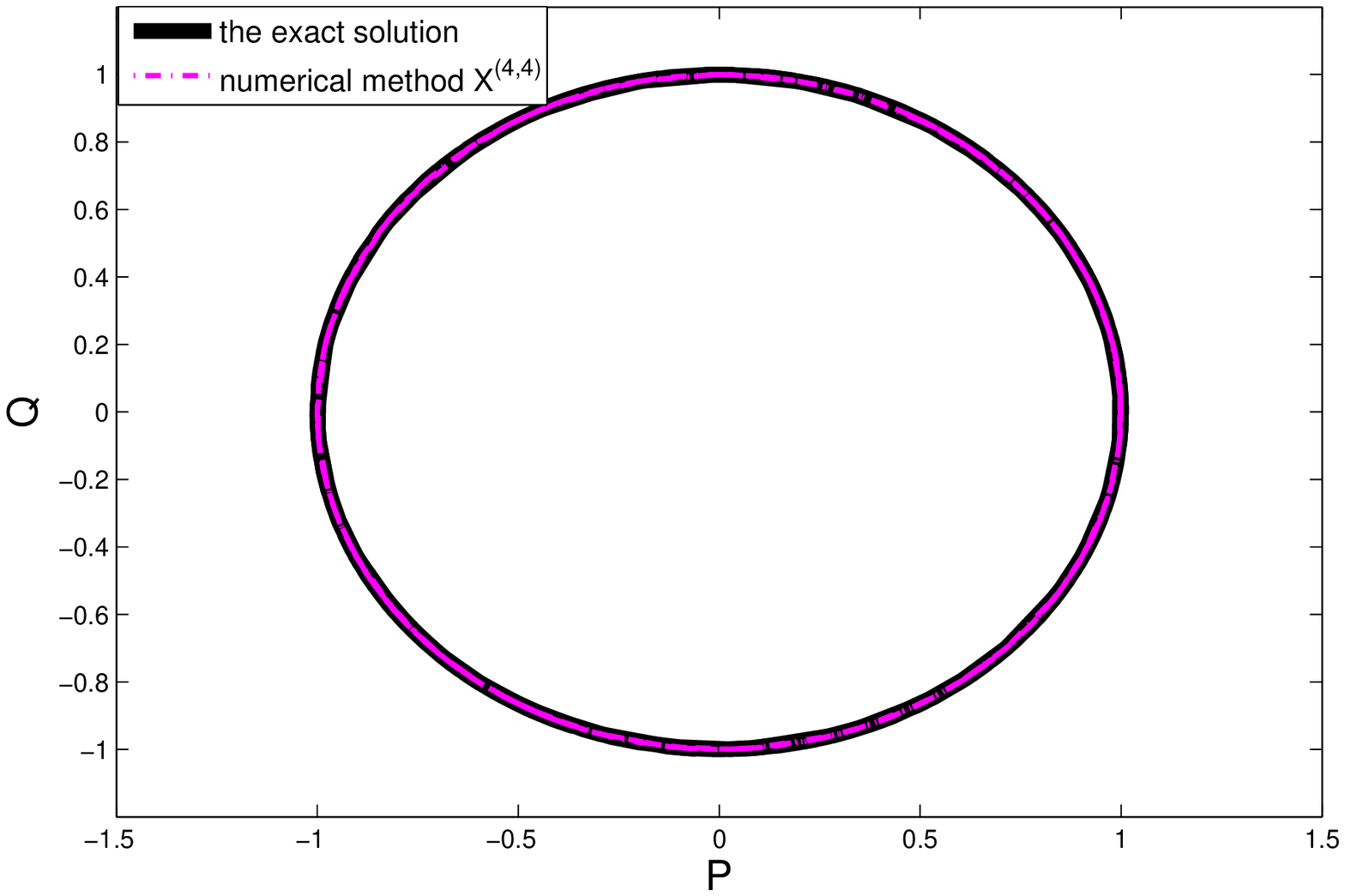}}
\caption{A sample path trajectory of the scheme (\ref{5.3}) and (\ref{5.4}) (above), A sample path trajectory of the scheme (\ref{5.5}) and (\ref{5.55}) (below).}\label{pp2}
\end{figure*}

We test here four methods. Denoting by $\bar B_{i}=(ah+\sigma \sqrt h\zeta_{i})$, $i=1,2,3,4$, the symplectic methods
(\ref{3.12})$\sim$(\ref{3.15}) applied to (\ref{5.1}) take the following forms, respectively,
\begin{equation}
\label{5.3}
\bar{X}_{n+1}^{(1,1)}=\bar{X}_{n}^{(1,1)}+\frac{1}{2} \bar B_{1}(\bar{X}_{n}^{(1,1)}+\bar{X}_{n+1}^{(1,1)}),
\end{equation}
with $A_{h}^{1}=\sqrt{4|\ln h|}$,
\begin{equation}
\label{5.4}
\bar{X}_{n+1}^{(2,2)}=\bar{X}_{n}^{(2,2)}+\frac{1}{2}\bar B_{2}(\bar{X}_{n}^{(2,2)}+\bar{X}_{n+1}^{(2,2)})+\frac{1}{12}\bar B_{2}^2(\bar{X}_{n}^{(2,2)}-\bar{X}_{n+1}^{(2,2)}),
\end{equation}
with $A_{h}^{2}=\sqrt{8|\ln h|}$,
\begin{equation}
\label{5.5}
\bar{X}_{n+1}^{(3,3)}=\bar{X}_{n}^{(3,3)}+(\frac{1}{2}\bar B_{3}+\frac{1}{120}\bar B_{3}^3)(\bar{X}_{n}^{(3,3)}+\bar{X}_{n+1}^{(3,3)})+\frac{1}{10}\bar B_{3}^2(\bar{X}_{n}^{(3,3)}-\bar{X}_{n+1}^{(3,3)}),
\end{equation}
with $A_{h}^{2}=\sqrt{12|\ln h|}$, and
\begin{equation}
\label{5.55}
\bar{X}_{n+1}^{(4,4)}=\bar{X}_{n}^{(4,4)}+(\frac{1}{2}\bar B_{4}+\frac{1}{84}\bar B_{4}^3)(\bar{X}_{n}^{(4,4)}+\bar{X}_{n+1}^{(4,4)})+(\frac{1}{24}\bar B_{4}^2+\frac{1}{1680}\bar B_{4}^4)(\bar{X}_{n}^{(4,4)}-\bar{X}_{n+1}^{(4,4)}).
\end{equation}
with $A_{h}^{2}=\sqrt{16|\ln h|}$.


The numerical tests examine the behaviors of the numerical methods from three aspects: first, the convergence rate of the numerical methods illustrated by Figure 1; second, the sample trajectory produced by the numerical methods and the true solution, as shown by Figure 2; and third, the Hamiltonians produced by the numerical methods, as presented in Figure 3.

Figure \ref{pp1} shows that, comparing with the reference lines, the numerical method (\ref{5.3}) is of mean-square order 1, the numerical schemes (\ref{5.4}) and (\ref{5.5}) are of the second and third mean square order respectively, and the numerical scheme (\ref{5.6}) is of mean-square order 4. These validate the theorem regarding mean-square convergence order of the proposed methods. In our experiments we take $T=5$, $p=1$, $q=0$ and $h=[0.01,0.02,0.025,0.05,0.1]$. The expectation $\bf{E}$ is approximated by taking average over 1000 sample paths.

Figure \ref{pp2} gives approximations of a sample phase trajectory of (\ref{5.1}) simulated by the symplectic methods (\ref{5.3})$\sim$(\ref{5.6}). The initial condition is $p=1$, $q=0$. The corresponding exact phase trajectory belongs to the circle with the center at the origin and with the unit radius. We see that the symplectic methods are appropriate for simulation of the oscillator (\ref{5.1}) on long time intervals $[0,100]$.

\begin{figure*}[t]
\centering\subfigure{
\includegraphics[width=6cm,height=5cm]{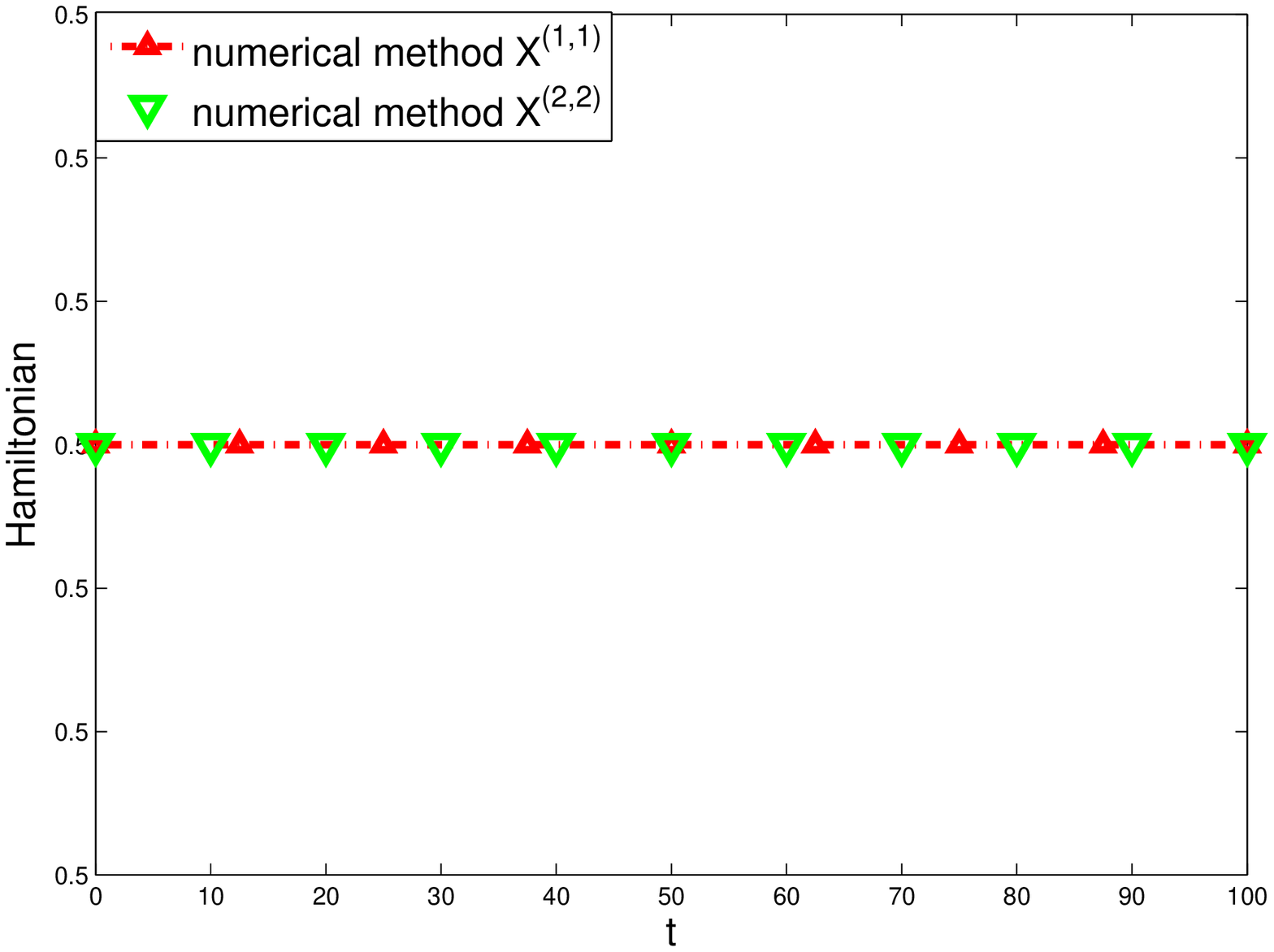}}
\subfigure{
\includegraphics[width=6cm,height=5cm]{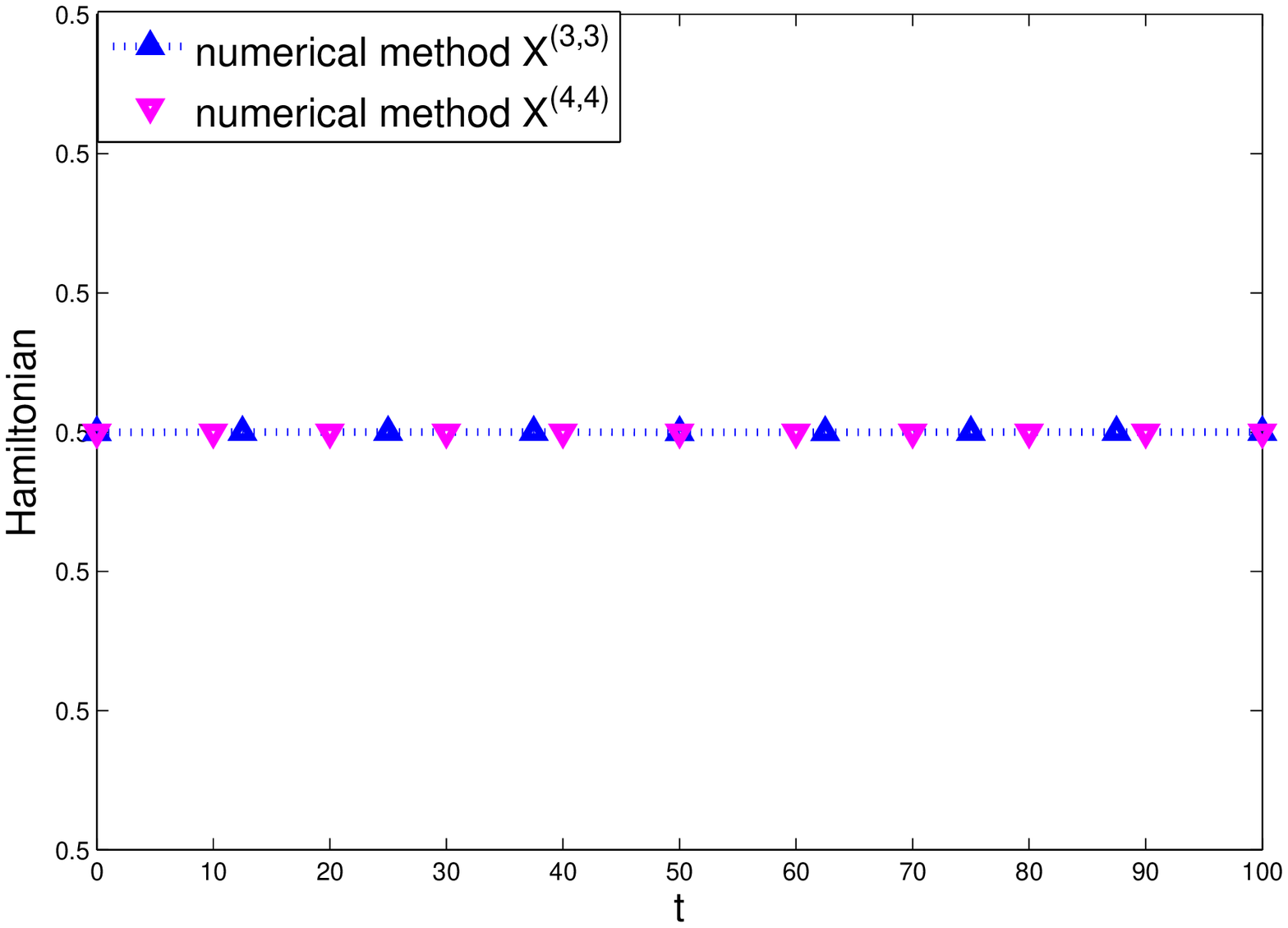}}
\caption{The conservation of the Hamiltonian of the scheme (\ref{5.3}) and (\ref{5.4}) (left), The conservation of the Hamiltonian of the scheme (\ref{5.5}) and (\ref{5.55}) (right)..}\label{pp3}
\end{figure*}

It is not difficult to check that $H(P, Q)$ is conserved by the numerical methods (\ref{5.3})$\sim$(\ref{5.6}). Figure \ref{pp3} illustrate this fact as we can see the Hamiltonian of the numerical schemes (\ref{5.3})$\sim$(\ref{5.6}) do not change. And we take $T=100$, $p=1$, $q=0$ in our experiments.

\vspace{1cm}
\noindent{{\bf Example 2}}\quad A linear stochastic oscillator.

The linear stochastic oscillator (\cite{melbo})
\begin{equation}
\label{5.6}
\begin{array}{l}
dp=-qdt+\sigma dW(t),\quad p(0)=0,\\
dq=pdt,\quad q(0)=1
\end{array}
\end{equation}
is a stochastic Hamiltonian system with $H=\frac{1}{2}(p^2+q^2)$, $H_1=-\sigma q$. Given the initial values, it can be shown (\cite{melbo}) that the system (\ref{5.6}) has the exact solution
\begin{equation}
\label{5.7}
\begin{split}
p(t)&=\quad p_0\cos t+q_0\sin t+\sigma\int_{0}^{t}\sin (t-s)dW(s),\\
q(t)&=-p_0\sin t+q_0\cos t+\sigma\int_{0}^{t}\cos (t-s)dW(s),
\end{split}
\end{equation}
which possess the two properties,\\
(1)the second moment ${\bf{E}}(p(t)^2+q(t)^2)=1+\sigma^2t$;\\
(2)(Markus and Weerasinghe \cite{Markus}) $p(t)$ has infinitely many zeros, all simple, on each half line $[t_0,\infty)$ for every $t_0\geq 0$, a.s..

Regarding the one-step approximations we proposed for the stochastic differential equations (\ref{5.6}), if we replace the $P_{(\hat{r},\hat{s})}$ and $P_{(\check{r},\check{s})}$ in (\ref{6.3}) with $P_{(2,2)}$ and $P_{(1,1)}$ respectively we will get
\begin{equation}
\label{5.8}
\begin{split}
Z_{n+1}=&\left[I-\frac{h}{2}J+\frac{h^2}{12}J^2\right]^{-1}\left[I+ \frac{h}{2}J+\frac{h^2}{12}J^2\right]Z_{n}\\
&+\int_{t_{n}}^{t_{n+1}}\left[I-\frac{t_{n+1}-\theta}{2}J\right]^{-1}\left[I+\frac{t_{n+1}-\theta}{2}J\right]\begin{pmatrix}
\sigma \\ 0
\end{pmatrix}dW(\theta),
\end{split}
\end{equation}
and if both $P_{(\hat{r},\hat{s})}$ and $P_{(\check{r},\check{s})}$ in (\ref{6.4}) are taken place by $P_{(1,1)}$, the numerical methods as follows
\begin{equation}
\label{5.9}
Z_{n+1}=\left[I-\frac{h}{2}J\right]^{-1}\left[I+\frac{h}{2}J\right]Z_{n}+\left[I-\frac{h}{2}J\right]^{-1}\left[I+\frac{h}{2}J\right]\begin{pmatrix}
\sigma \\ 0
\end{pmatrix}\Delta W_{n}
\end{equation}
will be obtained.

\begin{figure*}[t]
\centering\subfigure{
\includegraphics[width=6cm,height=5cm]{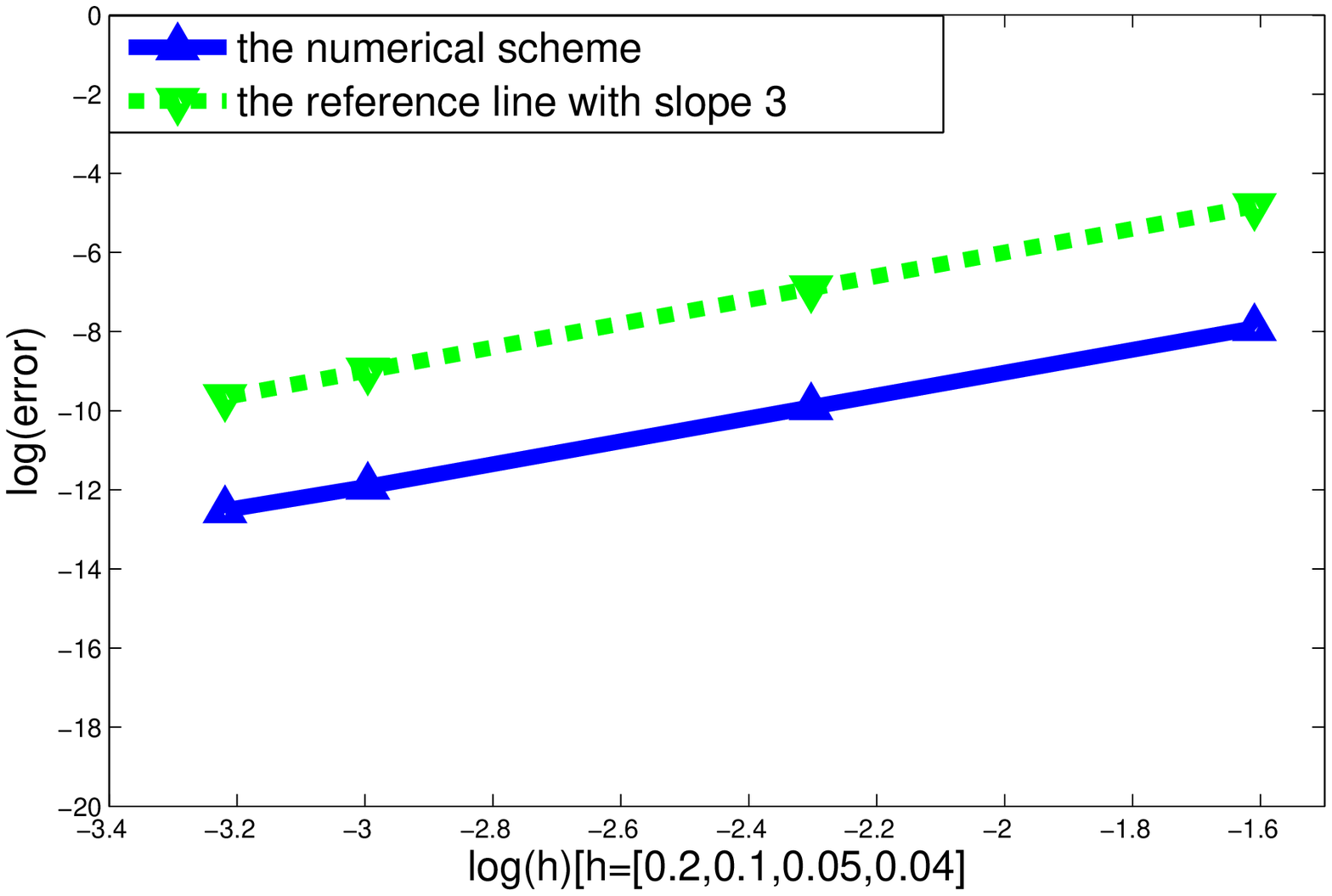}}
\subfigure{
\includegraphics[width=6cm,height=5cm]{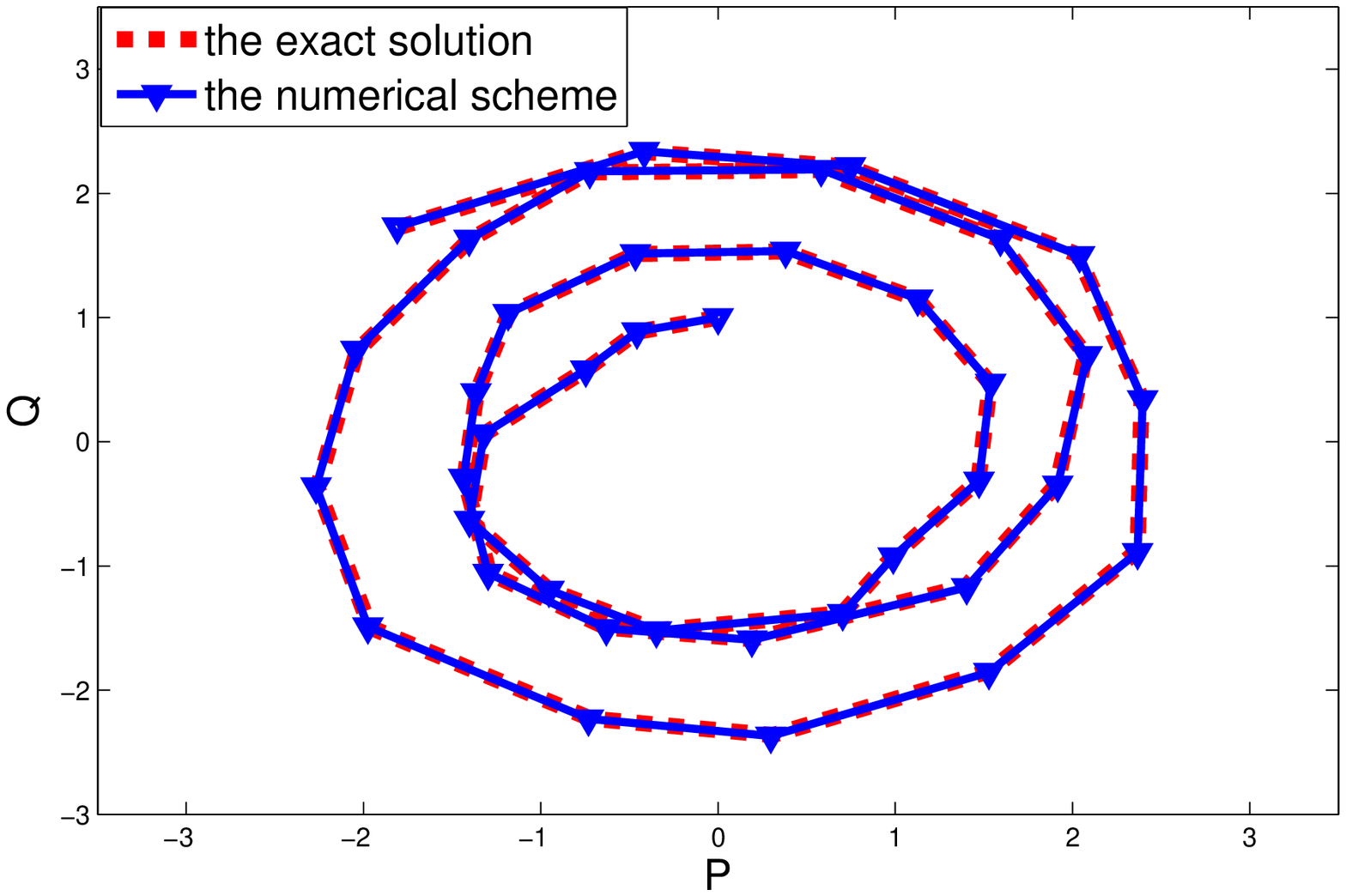}}
\caption{The mean-square convergence order of the scheme (\ref{5.8}) (left), and a sample path trajectory of the numerical solution (\ref{5.8}) (right).}\label{p1}
\end{figure*}
\begin{figure*}[t]
\centering\subfigure{
\includegraphics[width=6cm,height=5cm]{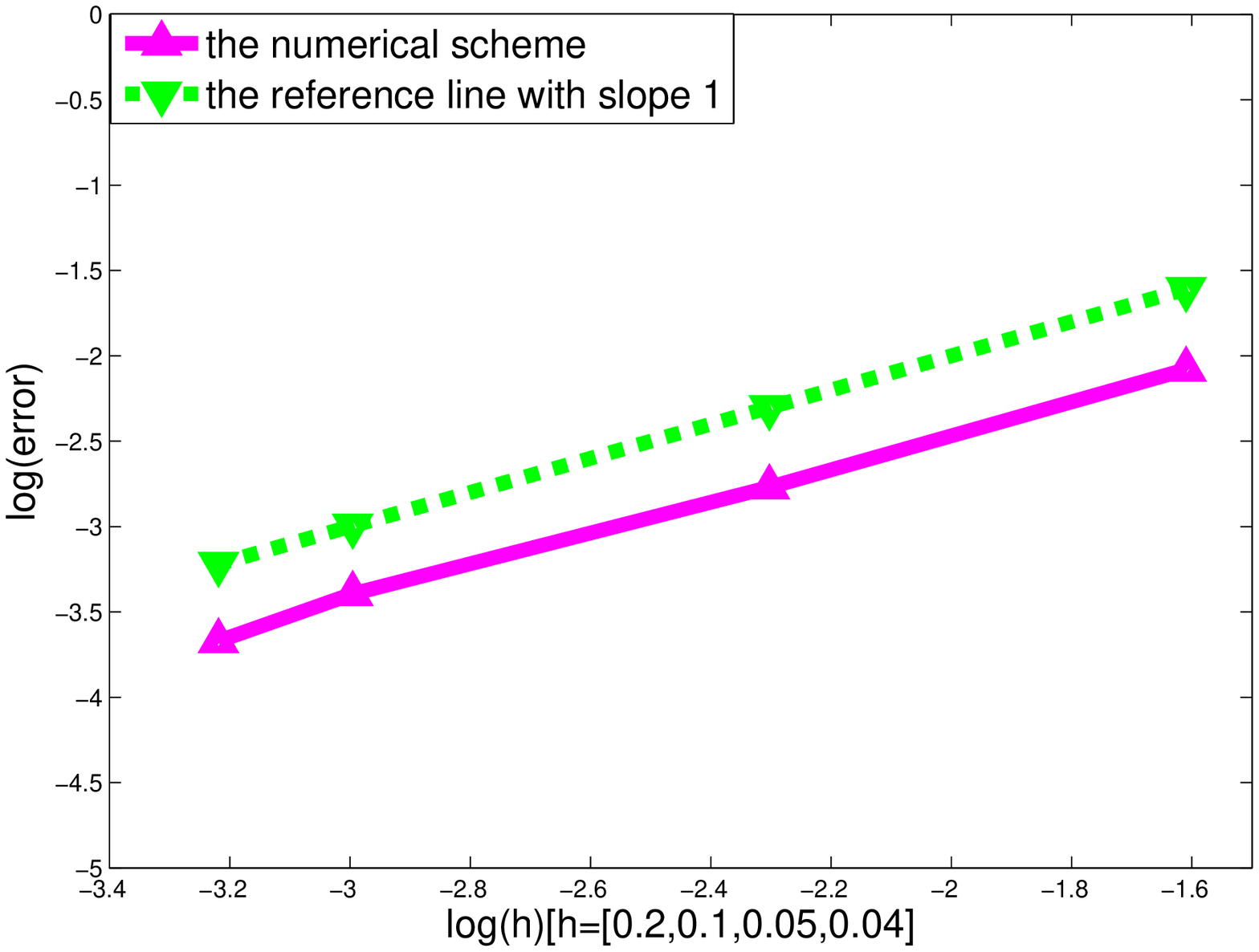}}
\subfigure{
\includegraphics[width=6cm,height=5cm]{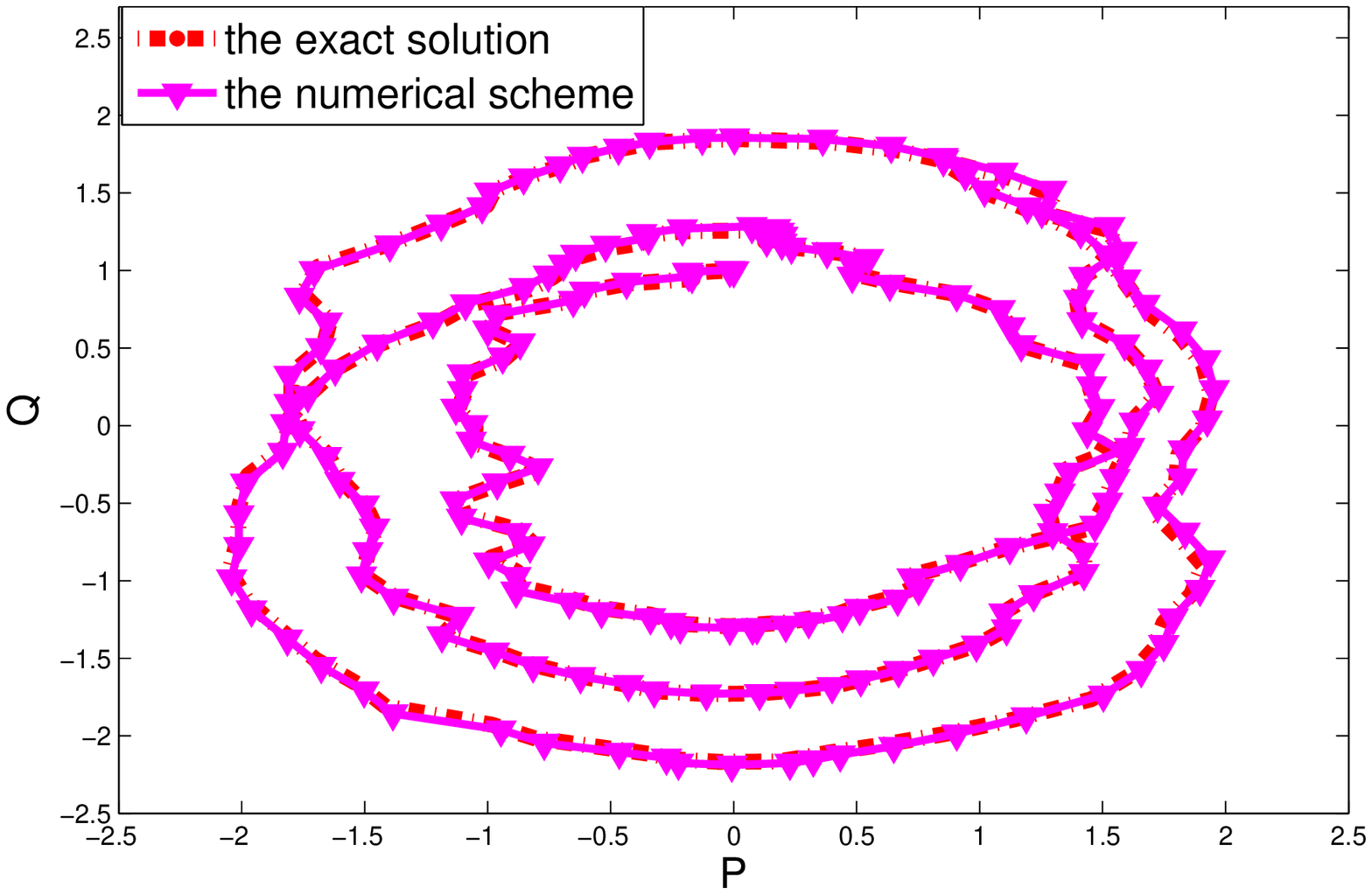}}
\caption{The mean-square convergence order of the scheme (\ref{5.9}) (left), and a sample path trajectory of the numerical solution (\ref{5.9}) (right).}\label{p3}
\end{figure*}

With respect to the above numerical methods (\ref{5.8}) and (\ref{5.9}), our numerical tests focus on two aspects of view mainly. One is to sketch the numerical mean, i.e. the sample average of $p_{n}^2+q_{n}^2$ directly, and then to compare their accordance with the reference line, the slop of which is the rate of the linear growth of the second moment of the solution (\ref{5.6}). The other is to show that the proposed numerical schemes (\ref{5.8}) and (\ref{5.9}) preserve the oscillation property.

For the numerical scheme (\ref{5.8}), the left panel of Figure \ref{p1} plots the value $\ln|{\bf{E}}(P(T)+Q(T))^2$$-{\bf{E}}(P_{N}+Q_{N})^2|$
(blue-dotted line) against
$\ln h$ for four different step sizes $h=[0.2;0.1;0.05;0.04]$ at $T =20$, where $T$ is
the subindex of one of the discrete time point such that $t_{N} =T$, and the $(P(T);Q(T))$ and
$(P_{N};Q_{N})$ represent the phase point of the exact solution and the numerical
method at time $T$, respectively. It can be seen that
the mean-square order of the scheme (\ref{5.8}) is 3, as indicated by the reference lines of slope 3. The expectation $\bf{E}$ is approximated by taking average over 500 sample paths. The right panel draws one sample trajectory of the numerical method
(\ref{5.8}), and that of the stochastic differential equation (\ref{5.7}), where near coincidence can be seen.

Similar to the test for the numerical scheme (\ref{5.8}), we observe the mean-square convergence order of the numerical method (\ref{5.9}) is 1, as illustrated by the left panel of Figure \ref{p3}. The data setting for the
left panel of Figure \ref{p3} is $p_0=0$, $q_0=1$, $T=20$ and
$h=[0.2;0.1;0.05;0.04]$, and the expectation $\bf{E}$ is approximated by taking average over 1000 sample paths. The right panel draws a sample phase trajectory of the exact solution (red-doted line), the numerical method (\ref{5.9}) (pink-solid line). The coincidence of a sample phase trajectory of the numerical method with that of the exact solution is also visible. The data here are the same with that for the right panel of Figure \ref{p1}.

\begin{figure*}[t]
\centering\subfigure{
\includegraphics[width=6cm,height=5cm]{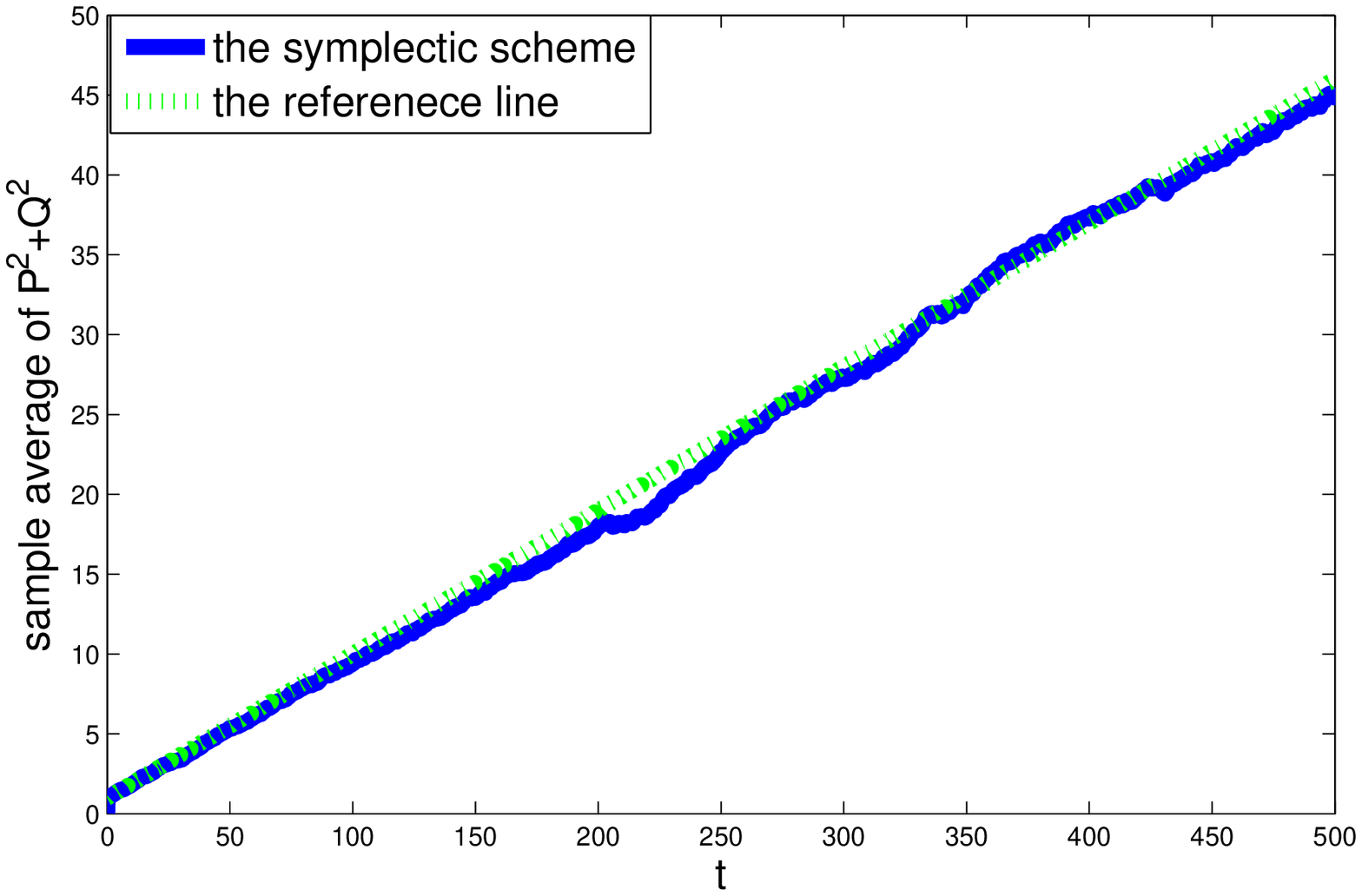}}
\subfigure{
\includegraphics[width=6cm,height=5cm]{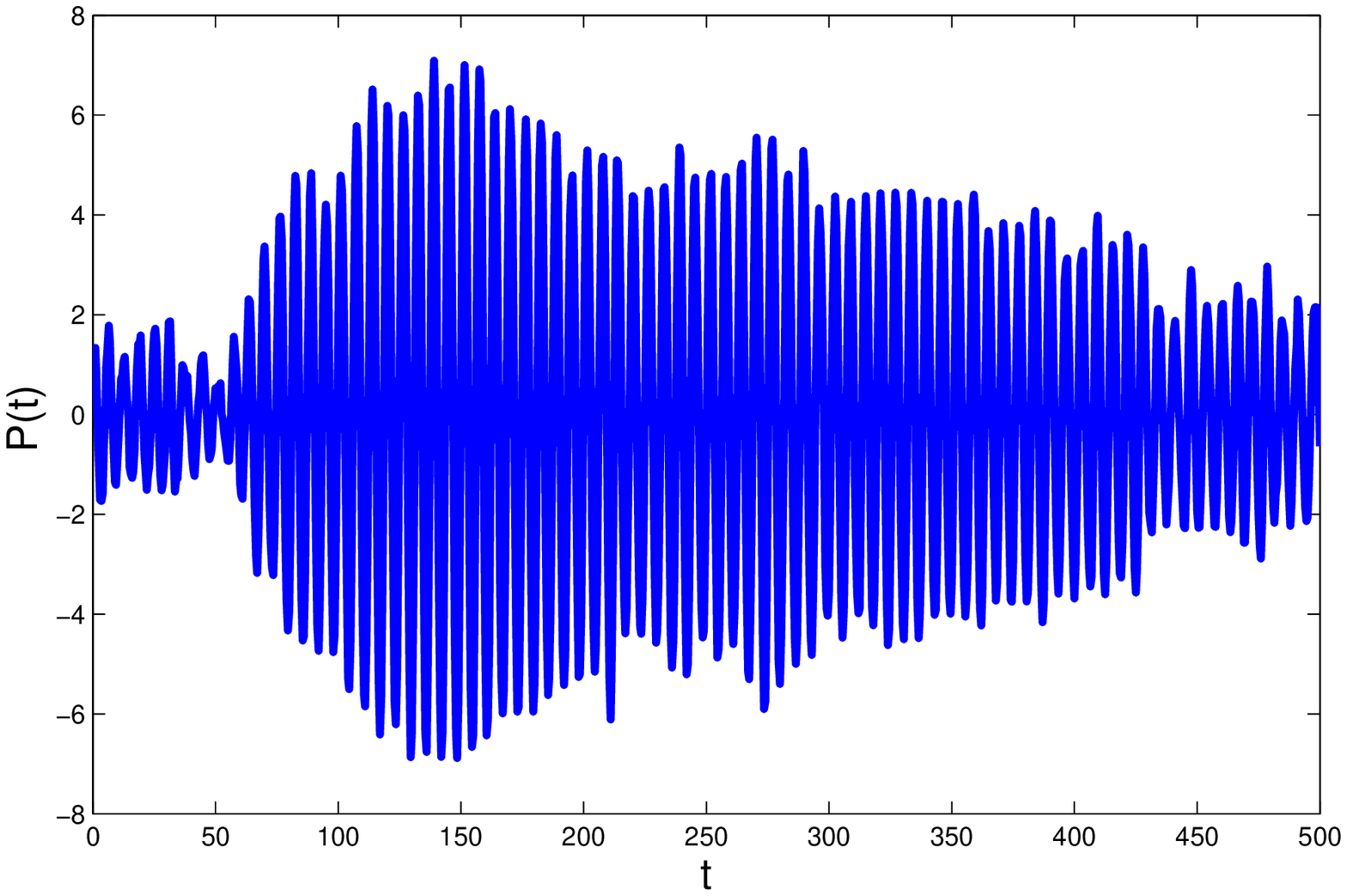}}
\caption{Preservation of the linear growth property of the numerical scheme (\ref{5.8}) (left), Oscillation of the numerical solution  (\ref{5.8}) (right)..}\label{p2}
\end{figure*}
\begin{figure*}[t]
\centering\subfigure{
\includegraphics[width=6cm,height=5cm]{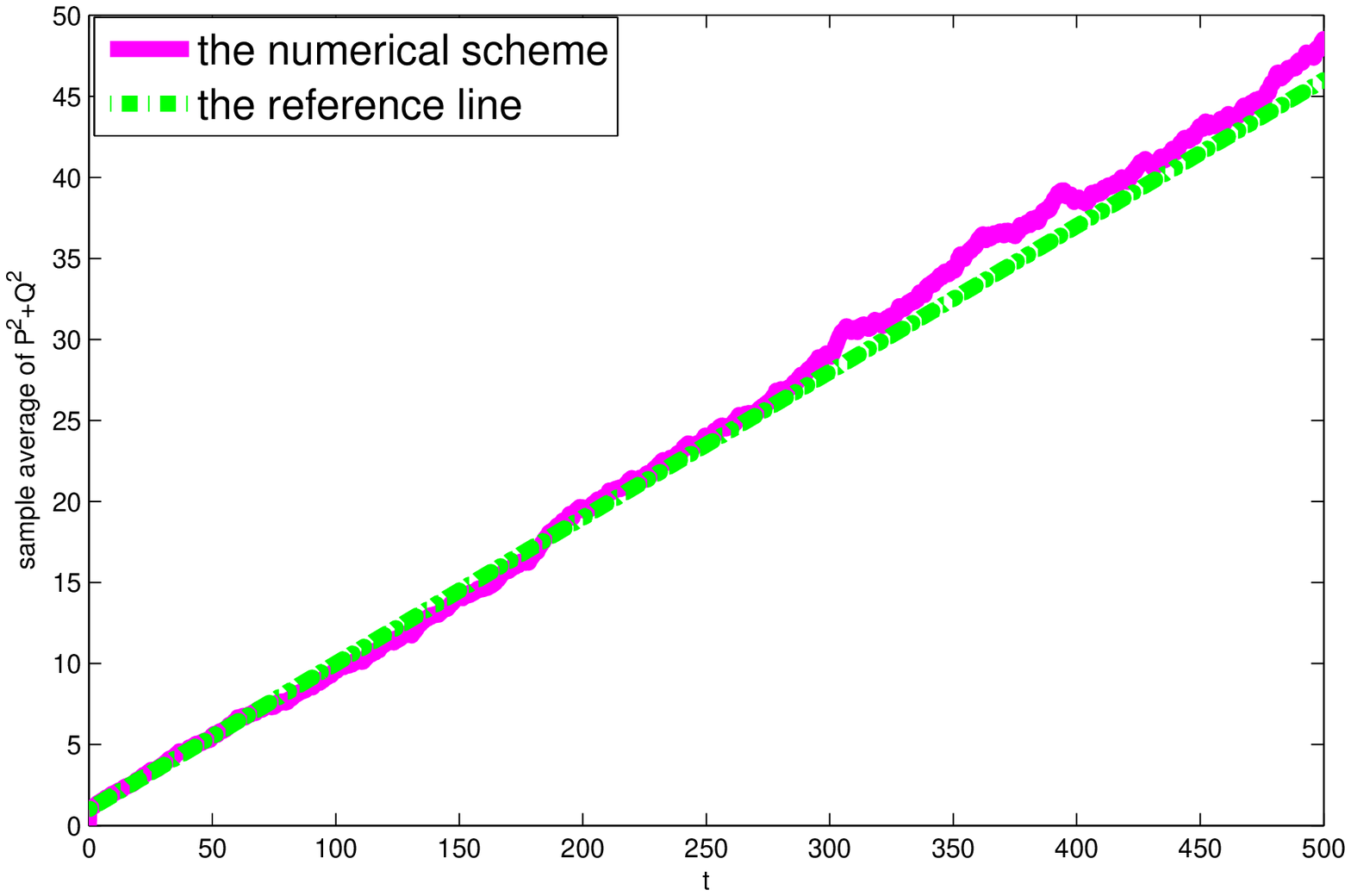}}
\subfigure{
\includegraphics[width=6cm,height=5cm]{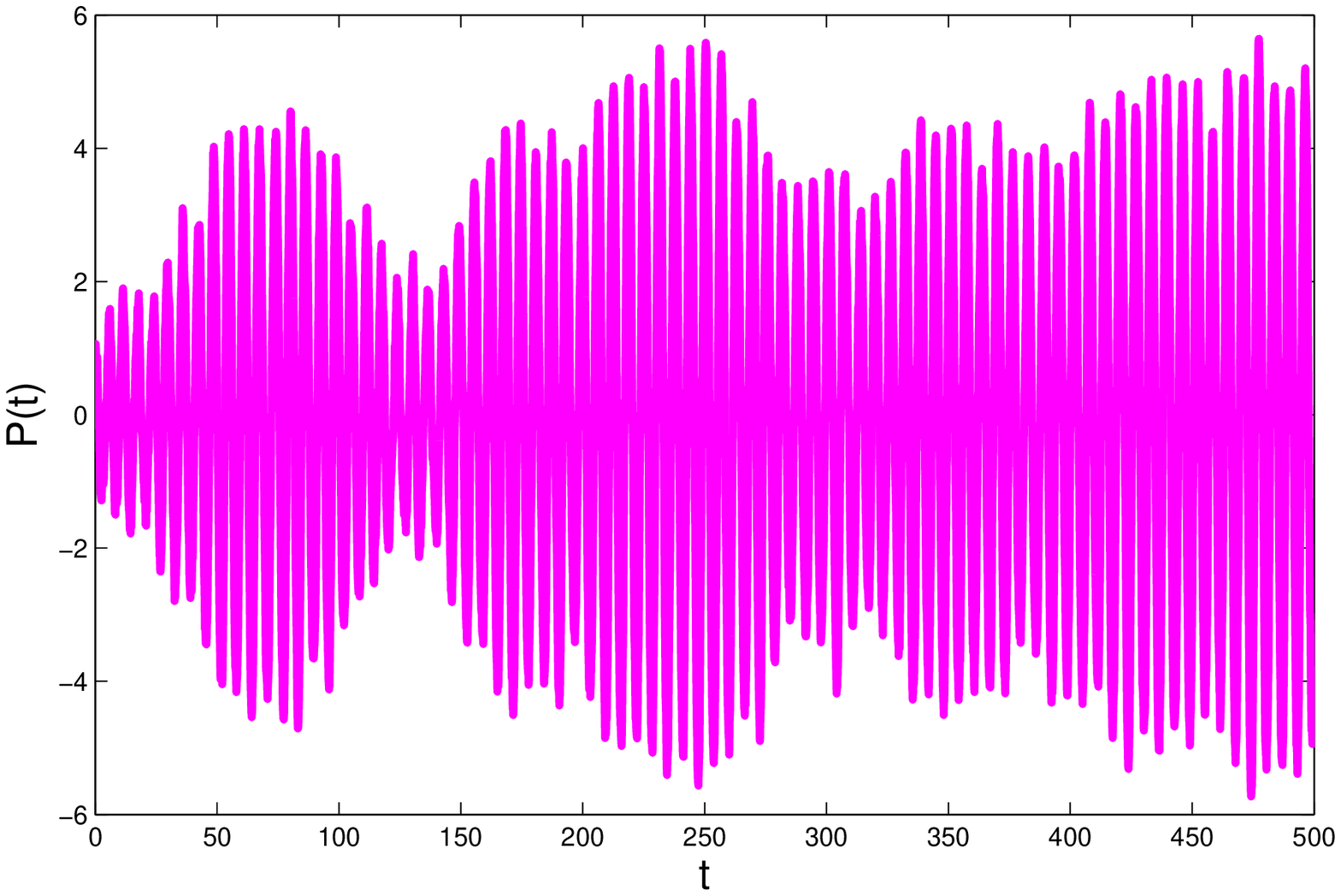}}
\caption{Preservation of the linear growth property of the numerical scheme (\ref{5.9}) (left), Oscillation of the numerical solution  (\ref{5.9}) (right).}\label{p4}
\end{figure*}

In the left panels of Figure \ref{p2} and \ref{p4}, the quantity ${\bf{E}}(p_{n}^2+q_{n}^2)$ with respect to the numerical solution $p_{n}$, $q_{n}$, $n=0,1,2,\cdots,5000$ produced by the numerical schemes (\ref{5.8}) (Figure \ref{p2}) and (\ref{5.9}) (Figure \ref{p4}) is simulated through taking sample average over 500 numerical sample paths created by corresponding numerical schemes applied to the linear stochastic system \ref{5.6}. Here $\sigma=0.3$, $t\in [0,500]$ and the stepsize is 0.1. The reference straight line (green-dashed) has slope 0.09, which is equal to $\sigma ^2$. It can be seen from the two figures that both the numerical schemes (\ref{5.8}) and (\ref{5.9}) preserve the linear growth property of the second moment of the solution for the original system (\ref{5.6}). Moreover, the long time oscillation behavior of
the numerical solutions (\ref{5.8}) and (\ref{5.9}) are illustrated in the right panels of Figure \ref{p2} and \ref{p4}.
The property that the numerical solutions (\ref{5.8}) and (\ref{5.9}) have infinite zeros is obvious via the Figures.

\section{Conclusion}
\label{6}
We construct stochastic symplectic numerical methods
using Pad$\acute e$ approximation, for linear stochastic Hamiltonian systems, and special
stochastic Hamiltonian systems with additive noises. Applications of the method to two examples, i.e Kubo oscillator and a linear stochastic oscillator, succeed in constructing
symplectic numerical solutions based on Pad$\acute e$ approximation which inherit the properties of the original systems. Numerical experiments
show the mean-square convergence orders of the proposed schemes. For the deterministic situations, it is known that the numerical schemes based on the Pad$\acute e$ approximation are $A$-stable (\cite{Aboanber}) under appropriate conditions. However, stochastic stability of our methods still need further investigation.

\end{document}